\documentclass[11pt,reqno]{amsart}
\usepackage[T1]{fontenc}
\usepackage{lmodern}
\usepackage{amsmath,amssymb,amsthm,mathtools}
\usepackage{booktabs,array,enumitem}
\usepackage[expansion=false]{microtype}
\usepackage[hidelinks]{hyperref}
\numberwithin{equation}{section}
\setlist[enumerate]{label=\textup{(\arabic*)},leftmargin=2em,itemsep=3pt}
\newtheorem{theorem}{Theorem}[section]
\newtheorem{proposition}[theorem]{Proposition}
\newtheorem{lemma}[theorem]{Lemma}
\newtheorem{corollary}[theorem]{Corollary}
\theoremstyle{definition}

\theoremstyle{remark}
\newtheorem{remark}[theorem]{Remark}
\DeclareMathOperator{\Var}{Var}
\DeclareMathOperator{\con}{con}
\newcommand{\AI}{\mathbf{AI}}
\newcommand{\Vthree}{\mathcal V_3}
\newcommand{\Athree}{\mathcal A_3}
\newcommand{\W}{\mathcal W}
\newcommand{\F}{\mathbb F_2}
\newcommand{\eqid}{\approx}
\newcommand{\ivec}[1]{\mathbf e(#1)}
\hypersetup{
  pdftitle={The variety generated by all semirings of order three is nonfinitely based},
  pdfauthor={Aifa Wang and Lili Wang},
  pdfsubject={Nonfinite bases for varieties generated by three-element semirings}
}

\begin{document}
\title[THREE-ELEMENT SEMIRINGS AND NONFINITE BASES]
{THE VARIETY GENERATED BY ALL SEMIRINGS OF ORDER THREE IS NONFINITELY BASED}
\author[AIFA WANG]{AIFA WANG}
\address{School of Mathematical Sciences, Chongqing University of Technology,
Chongqing 400054, People's Republic of China}
\email{wangaf@cqut.edu.cn}
\thanks{Lili Wang is the corresponding author.}
\author[WENHAO JU]{WENHAO JU}
\address{School of Mathematical Sciences, Chongqing University of Technology,
Chongqing 400054, People's Republic of China}
\author[LILI WANG]{LILI WANG}
\address{School of Mathematical Sciences, Chongqing University of Technology,
Chongqing 400054, People's Republic of China}
\subjclass[2020]{08B05, 16Y60, 03C05}
\keywords{Semiring, finite basis problem, generated variety,
polynomial identity, semilattice order}
\date{}
\raggedbottom
\emergencystretch=1em

\begin{abstract}
We prove that the variety generated by all semirings of order three is
nonfinitely based, where addition is not required to be commutative and
the signature has no constants. The same conclusion holds for the
variety generated by all additively idempotent semirings of order three.
We establish these conclusions by excluding a uniform bound on the
number of variables in an identity basis. Our proof uses identities
associated with anchored odd cycles. Three small commutative test
semirings isolate a polynomial equivalence class consisting of exactly
two polynomials. For a cycle of length $n$, every first nontrivial
deduction between them requires an identity with at least $n+1$
variables, even under polynomial substitutions. A retraction followed
by a band quotient transfers absorption identities to arbitrary
addition and identifies the ai-subvariety of the full joint variety
with the joint variety of the ai-generators. Validity of the cycle
identities follows from a structural analysis of chain and flat
addition. An elementary sixth-power lemma for semigroups of order at
most three supplies the retraction. 
\end{abstract}
\maketitle

\section{Introduction}
\label{sec:intro}

The finite basis problem asks whether the identities of an algebra can
be deduced from a finite set of identities. In terms of varieties, the
problem is to determine which varieties admit a finite equational
basis. A variety is a class of algebras closed under homomorphic images,
subalgebras, and arbitrary direct products. By Birkhoff's theorem,
these are precisely the classes defined by identities; see~\cite{BS}.
We write $\Var(\mathcal K)$ for the variety generated by a class
$\mathcal K$. A variety is \emph{finitely based} if it has a finite
identity basis, and is \emph{nonfinitely based} otherwise.

Throughout this paper, a \emph{semiring} is a nonempty algebra
$(S,+,\cdot)$ in which both operations are associative and multiplication
distributes over addition on both sides. The signature contains no
constants, and additive commutativity is not assumed. An
\emph{additively idempotent semiring}, or \emph{ai-semiring}, is a
semiring whose addition is both commutative and idempotent. We denote
the variety of all ai-semirings by $\AI$, and put
\[
\begin{aligned}
\Vthree&=\Var\{S:S\text{ is a semiring and }|S|=3\},\\
\Athree&=\Var\{S:S\in\AI\text{ and }|S|=3\}.
\end{aligned}
\]
Thus $\Athree\subseteq\Vthree$. When only additive idempotence is
assumed, we shall state this explicitly.

For two-element generators, several positive results are known.
Shao and Ren~\cite{SR} proved hereditary finite basability for the
variety generated by the two-element ai-semirings. Wang, Wang, Li,
and Yin~\cite{WWLY} studied the joint variety of two-element semirings
under the convention that addition is commutative. These conventions
must be distinguished from the unrestricted additive convention used
in the definition of $\Vthree$.

For individual three-element ai-semirings, the work of Zhao
et al.~\cite{ZRCSD} and Jackson, Ren, and Zhao~\cite{JRZ} establishes
that precisely one of the $61$ isomorphism types is nonfinitely based.
This algebra is customarily denoted $S_7$. In the conclusion of
\cite[p.~130]{ZRCSD}, the authors proposed studying the variety
generated by all ai-semirings of order three. The finite basis problem
for this joint variety requires a separate argument: containing a
nonfinitely based subvariety alone does not determine the finite basis
property of a variety.

Jackson, Ren, and Zhao~\cite[Theorem~4.9 and Corollary~4.11]{JRZ}
give nonfinite basis results under additional hypotheses, including
a relative result for varieties generated by flat semirings.
Gao et al.~\cite[Theorem~2.2]{GJRZ2025} obtain a criterion using
high-girth hypergraphs and apply it to further varieties containing
$S_7$. Whether every variety generated by a finite ai-semiring and
containing $S_7$ is nonfinitely based is explicitly asked in
\cite[Problem~3.1]{GRSY2026}. Thus strong nonfinite basability of
$S_7$ is not an available premise for the joint basis problem.
Shao, Ren, and Gao~\cite{ShaoRenGao2026} give nonfinitely based
ai-semirings that are not strongly nonfinitely based, further
illustrating why this distinction matters.

We settle the joint basis problem for three-element generators,
including semirings whose addition is not assumed commutative.
We establish nonfinite basability by excluding a uniform bound on
the number of variables in an identity basis.

\begin{theorem}\label{thm:main}
Neither $\Athree$ nor $\Vthree$ has an identity basis whose identities
involve a uniformly bounded number of variables. In particular, both
varieties are nonfinitely based.
\end{theorem}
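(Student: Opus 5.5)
The plan follows the strategy described in the abstract: for each odd $n$, produce an identity $\sigma_n$ in $n+1$ (or more) variables that holds in $\Athree$ (and in $\Vthree$), but cannot be derived from the $n$-variable identities of the variety. The standard reduction does the rest. If a basis $\Sigma$ used at most $k$ variables, every identity of the variety would be a consequence of the $k$-variable identities. Choosing $n\ge k$ then gives a contradiction.

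First, for each odd $n$, I would attach an ai-semiring identity $p_n\eqid q_n$ to an anchored odd cycle $C_n$ on vertices $x_1,\dots,x_n$ with an anchor variable $y$. For example, $p_n$ could be a sum of words $yx_ix_{i+1}$ over the edges (indices mod $n$) plus an extra term, and $q_n$ would be $p_n$ with one further word added whose presence is forced by odd-cycle non-bipartiteness. Validity comes first. Every three-element semiring's addition is, after analysis, either a chain (semilattice order), a flat addition, or non-idempotent. In the idempotent cases I would check $p_n\eqid q_n$ using the structural analysis of chain and flat addition: any evaluation making the extra word differ would 2-colour the odd cycle, which is impossible. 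For arbitrary additions I would use the retraction given by the sixth-power lemma: $x\mapsto x^6$ is idempotent on semigroups of order at most three. This retraction, followed by passing to the band quotient, lets me multiply the identity through so that it becomes an absorption identity valid in all of $\Vthree$. The same transfer shows that the ai-subvariety of $\Vthree$ equals $\Athree$, so a bounded basis for $\Vthree$ would yield one for $\Athree$ by adjoining $x+x\eqid x$ and $x+y\eqid y+x$. It therefore suffices to handle $\Athree$, while checking that the transferred identities also lie in $\Vthree$.

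Second, for non-derivability, I would use three small commutative test semirings in $\Athree$. Their joint identities isolate a class $\{p_n,q_n\}$: any polynomial equivalent to $p_n$ modulo these identities is $p_n$ or $q_n$. Consider a derivation of $p_n\eqid q_n$ from identities $u\eqid v$ in at most $n$ variables. Its first step that leaves $p_n$ must pass directly to $q_n$, and it uses a substitution instance $s(u)$ sitting as a subterm. Since the substitutions are polynomials, each variable of $u$ goes to a sum of words. Because there are at most $n$ variables, some pair of consecutive cycle vertices must be identified, or some cycle edge must be missed, in the image. I would then build an evaluation into a three-element ai-semiring, such as a flat semiring or $S_7$-like, that satisfies $u\eqid v$ yet separates the terms: collapse the cycle to a bipartite path where the edge is missed.

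The main obstacle is this last step, namely controlling polynomial (rather than variable-to-variable) substitutions and showing that the first nontrivial move really needs $n+1$ variables. My first attempt would be a combinatorial lemma: every term of $s(u)$ appearing in $p_n$ covers a connected sub-walk of the cycle. Fewer than $n+1$ variables then force a proper subgraph, which is bipartite, so $u\eqid v$ fails in a test semiring exactly as $p_n\eqid q_n$ would.
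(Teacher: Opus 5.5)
Your outline follows the paper's architecture: an anchored odd cycle, three commutative test semirings isolating a two-element class, the first nontrivial step, the additive sixth-power retraction with band quotient, and $\Vthree\cap\AI=\Athree$. However, the step that carries the theorem is missing, and the mechanism you sketch for it fails.

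A three-element ai-semiring that ``satisfies $u\eqid v$ yet separates the terms'' cannot exist. It would satisfy every substitution instance of $u\eqid v$, and by your own validity step it satisfies $p_n\eqid q_n$. The coherent version is your contrapositive: an identity with at most $n$ variables that performs the step must fail in a test semiring. But the pigeonhole claim behind it is unfounded. Under polynomial substitutions one formal variable can be sent to a long word. So few variables force no identification of cycle vertices and no missed edge until you know the substituted blocks are small.

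The missing idea is to apply the test-semiring constraints to the formal identity itself.
\begin{enumerate}
\item Put the basis identity in absorption form $s+r\eqid s$. It is valid in $\W\subseteq\Athree$, and the first step reads $K\varphi(s)+R\to K\varphi(s)+K\varphi(r)+R$ with $q_n\in K\varphi(r)$.
\item Linearity of $U_n+q_n$ forces $r$ and all words of $s$ to be linear.
\item Validity in $H$ then puts any two variables of $r$ together in some word of $s$.
\item Validity in $D$ excludes a one-variable $r$: the word $v$ would lie in $s$, giving $K\varphi(r)\subseteq U_n$.
\item Write $q_n=h\prod_v w_v$, with $h$ a word of $K$ (or empty) and $w_v\in\varphi(v)$. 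Co-occurrence puts $\con(h)\cup\con(w_v)\cup\con(w_w)$ inside one word of $U_n$ for every $v\neq w$.
\item Since every word of $U_n$ containing $z$ contains exactly one $x_i$, the context $h$ is empty and every block is a single letter. Hence $r$ has at least $n+1$ variables.
\end{enumerate}
The cycle plays no role in this count: it lives on the $y_i$, which do not occur in $q_n=x_1\cdots x_nz$.

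The validity half is also underdetermined, because your identity is not specified. For the shape you suggest, the word forced by parity in the flat group semiring $G$ comes from the product of all edge words, $y^n\prod_i x_i^2$, which evaluates to $y$ there. But $y$ is not below $\sum_i yx_ix_{i+1}$ in the two-element lattice $D$. The extra words must be chosen so that a single target word passes the $H$, $D$ and $G$ tests at once and the class is genuinely isolated. That is what $b_i=x_iy_iy_{i+1}z$, $a_n=x_1z$ and $q_n=p_nz$ achieve.

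Likewise, ``a violation would $2$-colour the odd cycle'' is only the argument in $G$. The remaining cases need their own proofs:
\begin{itemize}
\item In chain semirings the cycle is irrelevant. They need a separate monotonicity analysis: the anchor yields $x_1<P<Pz$ in a three-element chain, which is then ruled out.
\item Flat semirings whose top is a multiplicative zero need a case analysis on idempotents.
\item Flat semirings whose top is not a zero embed in products of two-element chains.
\end{itemize}
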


Both varieties are generated by finite algebras: take the direct
product of one representative of each three-element isomorphism type.
They are therefore locally finite. For varieties of finite signature
that are locally finite, the existence of a bounded-variable identity
basis is equivalent to finite basability; we recall the short proof
in Lemma~\ref{lem:localfinite}. The variable formulation of
Theorem~\ref{thm:main} is useful because our construction gives an
explicit lower bound for the identities needed in a deduction.

The proof combines an explicit family of valid identities with a
restriction on all possible deductions of those identities. Polynomial
substitutions and restrictions on contextual occurrences are useful
tools in ai-semiring basis problems; see~\cite{JRZ,YRG}.
Odd-cycle identities also occur in~\cite{ShaoRenGao2026}.
Here we combine an anchored cycle with
three small commutative ai-semirings to impose, respectively, restrictions
on pairs of variables, containment of supports, and affine spans over
$\F$. For a suitable polynomial $U_n$, these restrictions leave exactly
two polynomials in its equivalence class. Any deduction of the identity
between them must therefore make the entire change in its first
nontrivial step. The structure of the supports forces that step to
use at least $n+1$ variables.

The same obstruction proves that the $18$-element direct product of
the three test semirings is nonfinitely based, although each factor
is finitely based. For the passage to arbitrary addition, we establish
the structural identity
$\Vthree\cap\AI=\Athree$. It reduces the final step of the main
theorem to adjoining the two identities for semilattice addition.

We organize the proof as follows. Section~\ref{sec:prelim} records the
polynomial notation and the form of equational deductions.
Section~\ref{sec:tests} introduces the three test semirings.
In Section~\ref{sec:cycles} we define the anchored odd-cycle identities
and prove their validity by analyzing the two possible shapes of
three-element additive semilattices. Sections~\ref{sec:isolation}
and~\ref{sec:nfb} prove the
isolation and the unbounded-variable obstruction, completing the
argument for $\Athree$. Section~\ref{sec:arbitrary} transfers the
identity family to $\Vthree$ and completes the proof of
Theorem~\ref{thm:main}.

\section{Preliminaries}
\label{sec:prelim}

We first record the relationship between the two basis formulations
used in the introduction.

\begin{lemma}\label{lem:localfinite}
Let $\mathcal V$ be a locally finite variety of finite signature.
Then $\mathcal V$ has an identity basis with a uniform finite
variable bound if and only if it is finitely based.
\end{lemma}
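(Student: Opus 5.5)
One direction is immediate: a finite identity basis $\Sigma$ involves only finitely many variables in total, so its members have a uniform variable bound, namely the largest number of variables occurring in a single identity of $\Sigma$. The content of the lemma is the converse. So assume $\mathcal V$ has an identity basis $\Sigma$ in which every identity involves at most $k$ variables, and we aim to replace $\Sigma$ by a finite basis.

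The plan is to use local finiteness to collapse terms in $k$ variables into finitely many classes. First, rename variables so that every identity of $\Sigma$ is written in the fixed variables $x_1,\dots,x_k$. Renaming does not change which identities are consequences, because the deduction rules include substitution. Next, consider the free algebra $F=F_{\mathcal V}(x_1,\dots,x_k)$. It is finite because $\mathcal V$ is locally finite, say with $N$ elements. Choose one term $t_a$ in $x_1,\dots,x_k$ representing each $a\in F$; the generators $x_i$ should be among these representatives. Let $\Sigma_0$ be the finite set of identities of the following two kinds:
\begin{itemize}
\item $f(t_{a_1},\dots,t_{a_r})\approx t_{b}$ for every basic operation $f$ of arity $r$ (there are finitely many, since the signature is finite) and every tuple $(a_1,\dots,a_r)\in F^r$, where $b=f^F(a_1,\dots,a_r)$;
\item $x_i\approx t_{x_i}$, which is trivial if the representative of $x_i$ is $x_i$ itself.
\end{itemize}
All of these identities hold in $\mathcal V$ by construction of the free algebra.

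The key step is to show that $\Sigma_0$ derives every identity $u\approx v$ of $\mathcal V$ in the variables $x_1,\dots,x_k$. This goes by induction on the term $u$: using the identities of $\Sigma_0$, we show $\Sigma_0\vdash u\approx t_{[u]}$, where $[u]$ is the value of $u$ in $F$. The base case is a variable. For the inductive step, replace each argument of the outermost operation by its representative using the induction hypothesis and the compatibility rule, and then apply the corresponding operation-table identity. If $u\approx v$ holds in $\mathcal V$, then $[u]=[v]$, so $\Sigma_0\vdash u\approx t_{[u]}=t_{[v]}\approx v$. In particular $\Sigma_0$ derives every member of $\Sigma$. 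Since $\Sigma_0$ is sound for $\mathcal V$ and $\Sigma$ is a basis, $\Sigma_0$ is a finite basis.

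No step is a real obstacle. The only points needing care are that finiteness of the signature is what makes $\Sigma_0$ finite, and that the variable-renaming step is legitimate.
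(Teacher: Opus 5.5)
Your proposal is correct and follows the paper's proof essentially step for step. Both arguments rename the variables into $x_1,\dots,x_k$, use finiteness of $F_{\mathcal V}(k)$ to choose representative terms, take the finite set of operation-table identities together with $x_i\approx t_{[x_i]}$, and prove $u\approx t_{[u]}$ by induction on terms. The only extra detail in the paper is that it takes $k\geq1$, so that the free algebra is nonempty when there are no constants; this costs nothing, since the bound can always be enlarged.
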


\begin{proof}
A finite basis has a largest variable count. Conversely, suppose
that $\Sigma$ is a basis whose identities have at most $k$ variables,
where we may take $k\geq1$. Rename the variables of each identity
into $x_1,\ldots,x_k$. The free algebra $F=F_{\mathcal V}(k)$ is
finite. For each $a\in F$, choose a representative term $t_a$
in these variables. Write $[t]$ for the element of $F$ represented
by a term $t$. Let $\Delta$ consist of
\[
x_i\eqid t_{[x_i]}\quad(1\leq i\leq k)
\]
and, for each basic operation $f$ of arity $m$ and each tuple
$(a_1,\ldots,a_m)\in F^m$, the identity
\[
f(t_{a_1},\ldots,t_{a_m})
\eqid t_{f^F(a_1,\ldots,a_m)}.
\]
This includes the analogous identity for a nullary operation if
one is present. The set $\Delta$ is finite, and its identities
hold in $\mathcal V$. Induction on terms shows that
$\Delta\vdash t\eqid t_{[t]}$ for every term in $x_1,\ldots,x_k$.
Consequently, $\Delta$ derives every identity of $\mathcal V$
in these variables, and hence every renamed member of $\Sigma$.
Thus $\operatorname{Mod}(\Delta)=\mathcal V$, proving that
$\Delta$ is a finite basis.
\end{proof}

The semiring axioms used throughout are
\[
\begin{aligned}
(x+y)+z&\eqid x+(y+z), & (xy)z&\eqid x(yz),\\
x(y+z)&\eqid xy+xz, & (x+y)z&\eqid xz+yz.
\end{aligned}
\]
For an ai-semiring $S$, the relation
\[
a\leq b\quad\Longleftrightarrow\quad a+b=b
\]
is its additive semilattice order. We write $q\preceq u$ for the
identity $u+q\eqid u$. Equivalently, every evaluation sends the value
of $q$ below the value of $u$ in this order.

In a \emph{commutative} ai-semiring, multiplication is also commutative.
Let $X$ be a countably infinite set of variables and let $X_c^+$ be the
free commutative semigroup on $X$. A word in $X_c^+$ is nonempty and is
determined by the multiplicity of each variable. A \emph{polynomial}
is a finite nonempty subset of $X_c^+$, written as the formal sum of
its words. Addition of polynomials is union, and their product is
setwise word multiplication. These operations give the free
commutative ai-semiring on $X$.

We distinguish equality of polynomials, written $=$, from an identity,
written $\eqid$. In particular, repeated summands disappear in a
polynomial, but repeated letters in a word do not. For a word or a
polynomial $u$, its \emph{content} $\con(u)$ is the set of variables
occurring in it. A word is \emph{linear} if each of its variables occurs
exactly once; a polynomial is linear if all of its words are linear.
A substitution sends each variable to a nonempty polynomial and
extends to a homomorphism of the free commutative ai-semiring.

An empty word or an empty polynomial is not a term in our signature.
We use them only to indicate, respectively, an absent multiplicative
context or an absent additive context in a deduction. In particular,
substitutions never take empty values.

\begin{lemma}[Absorption form]\label{lem:absorption}
Modulo the axioms of commutative ai-semirings, every identity is
equivalent to a finite set of identities $s+r\eqid s$, where $s$ is
a polynomial and $r$ is a word. This replacement introduces no
additional variables. Every contextual application of an absorption
identity has the form
\begin{equation}\label{eq:step}
K\varphi(s)+R
\quad\longleftrightarrow\quad
K\varphi(s)+K\varphi(r)+R.
\end{equation}
Here $\varphi$ is a substitution, $K$ is a polynomial or an absent
multiplicative context, and $R$ is a polynomial or an absent additive
context.
\end{lemma}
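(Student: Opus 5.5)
The plan has two parts: first reduce an arbitrary identity to absorption identities, then analyze a single step of an equational deduction.

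\emph{Reduction to absorption form.} Modulo the commutative ai-semiring axioms, every term equals a polynomial, so an identity $u\eqid v$ may be read as an identity between polynomials. In a semilattice, $u\eqid v$ is equivalent to the pair $u+v\eqid u$ and $u+v\eqid v$. Indeed, $u=u+v=v$ gives one direction, and the converse holds because both identities follow from $u\eqid v$ by idempotence. Next, for a polynomial $s$ and a polynomial $t=r_1+\cdots+r_m$, the identity $s+t\eqid s$ is equivalent to the finite set of identities $s+r_i\eqid s$ for $1\le i\le m$. One direction adds all the $r_i$ successively. For the other, we add $r_i$ to both sides of $s+t\eqid s$ and use idempotence, which gives $s+t\eqid s+t+r_i\eqid s+r_i$, and hence $s+r_i\eqid s$. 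Applying this with $s=u$, $t=v$ and with $s=v$, $t=u$ gives the claimed finite set. Every word $r$ and polynomial $s$ that appears is built from variables of $u$ and $v$, so no new variables are introduced.

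\emph{Shape of a single step.} A single deduction step replaces a subterm $\varphi(p)$ of a term by $\varphi(q)$, where $p\eqid q$ is one of the absorption identities or its reverse. I would normalize the ambient term modulo the axioms. Distributivity and commutativity of multiplication let me write the term as a polynomial. Multiplicative associativity and commutativity show that the occurrence sits inside a multiplicative context $K$, a product of other subterms expanded into a polynomial or else absent. Additive associativity and commutativity collect the remaining summands into $R$, which is a polynomial or absent. The whole term is then $K\varphi(p)+R$, with $K\varphi(p)$ computed by distributivity. For $p=s$ and $q=s+r$, distributivity gives $K(\varphi(s)+\varphi(r))=K\varphi(s)+K\varphi(r)$, and this is exactly the form \eqref{eq:step}. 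The reverse direction is the same step read backwards.

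\emph{Main obstacle.} The care point is the induction on the position of the subterm occurrence. I would argue by induction on the depth of the occurrence, with two cases. If the occurrence lies in a summand, it passes into $R$ and $K$ is unchanged. If it lies in a product factor, $K$ is multiplied by the other factor. Both operations preserve the form $K\varphi(s)+R$ after expansion, with the absent cases handled consistently. Once this is in place, \eqref{eq:step} follows directly.
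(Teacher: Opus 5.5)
Your proposal is correct and matches the paper's proof. Your two-stage reduction (first to $u+v\eqid u$ and $u+v\eqid v$, then splitting into words) yields exactly the paper's set $v+u_i\eqid v$, $u+v_j\eqid u$, and your induction on the position of the occurrence is the paper's induction on the construction of the context, using $R\mapsto R+A$ for sums and $A(Kt+R)=(AK)t+AR$ for products. One piece of phrasing should be tidied: in the sum case it is the other summand, not the occurrence, that passes into $R$, and in the product case $R$ is also multiplied by the other factor.
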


\begin{proof}
Write the normalized sides of an identity as
$u=u_1+\cdots+u_t$ and $v=v_1+\cdots+v_m$, where each $u_i$ and
$v_j$ is a word. Replace $u\eqid v$ by
\[
v+u_i\eqid v\quad(1\leq i\leq t),
\qquad
u+v_j\eqid u\quad(1\leq j\leq m).
\]
If $u\eqid v$ holds, then every $u_i$ is below $u=v$ and every
$v_j$ is below $v=u$, so all the displayed identities hold.
Conversely, their conjunction gives
$u+v\eqid v$ and $u+v\eqid u$, and hence $u\eqid v$.
All words used in this replacement come from the original sides,
which proves the assertion about variables.

For the contextual assertion, consider a term context with exactly
one distinguished occurrence of its argument $t$. We prove by induction
on the construction of the context that its polynomial form is $Kt+R$.
For the context consisting only of $t$, both contexts are absent.
Adding a term $A$ to $Kt+R$ changes the additive context to $R+A$.
Multiplying by $A$ gives
\[
A(Kt+R)=(AK)t+AR.
\]
The same argument applies to multiplication on the right because
multiplication is commutative. Thus the asserted form is preserved
at every stage. Substituting $\varphi(s)$ and
$\varphi(s)+\varphi(r)$ for the distinguished argument now gives
\eqref{eq:step}. If either context is absent, the corresponding
factor or summand is simply omitted.
\end{proof}

By the standard description of equational consequence, an identity
follows from a set $\Sigma$ of identities precisely when its two sides
are connected by a finite sequence of contextual substitutions of
identities from $\Sigma$, in either direction; see~\cite{BS}.
After the replacement in Lemma~\ref{lem:absorption}, each step between
normalized polynomials has form~\eqref{eq:step}. The axioms of
commutative ai-semirings are already built into the polynomial
representation. This remains true when $\Sigma$ is infinite, since
each particular deduction is finite.

\section{Three test semirings}
\label{sec:tests}

Let $D=\{0,1\}$ be the two-element distributive lattice, with join as
addition and meet as multiplication. Let $H=\{1,a,\infty\}$ have
additive chain $1<a<\infty$, multiplicative identity $1$, and
$a^2=\infty$, with $\infty$ absorbing for multiplication. Finally,
let $G=\{1,g,\infty\}$ be the flat extension of the cyclic group of
order two. Thus $g^2=1$, the element $\infty$ is multiplicatively
absorbing, and any two distinct elements have sum $\infty$.
The operation tables of $H$ and $G$ are displayed in
Table~\ref{tab:tests}. All element names are labels, not constant
symbols in the signature.

\begin{table}[ht]
\caption{The test semirings $H$ and $G$.}
\label{tab:tests}
\centering
\setlength{\tabcolsep}{5pt}
\renewcommand{\arraystretch}{1.1}
\begin{tabular}{c@{\qquad}c}
$\begin{array}{c|ccc}
+_H&1&a&\infty\\\hline
1&1&a&\infty\\
a&a&a&\infty\\
\infty&\infty&\infty&\infty
\end{array}$&
$\begin{array}{c|ccc}
\cdot_H&1&a&\infty\\\hline
1&1&a&\infty\\
a&a&\infty&\infty\\
\infty&\infty&\infty&\infty
\end{array}$\\
\noalign{\vskip12pt}
$\begin{array}{c|ccc}
+_G&1&g&\infty\\\hline
1&1&\infty&\infty\\
g&\infty&g&\infty\\
\infty&\infty&\infty&\infty
\end{array}$&
$\begin{array}{c|ccc}
\cdot_G&1&g&\infty\\\hline
1&1&g&\infty\\
g&g&1&\infty\\
\infty&\infty&\infty&\infty
\end{array}$
\end{tabular}
\end{table}

These algebras are commutative ai-semirings. For $H$, multiplication
on the chain is associative and order preserving; every order-preserving
map on a chain preserves binary joins, which gives distributivity.
For $G$, multiplication by a group element permutes the two group
elements and fixes $\infty$, hence preserves the flat join. Multiplication
by $\infty$ is constant, and also preserves it.

Set $\W=\Var(H,D,G)$. Both $H$ and $G$ have order three.
To place $D$ in $\Athree$, adjoin a new element $\omega$ absorbing
for both operations. All axioms on old elements remain valid, and an
axiom involving $\omega$ has value $\omega$ on both sides. The resulting
three-element ai-semiring contains $D$. Therefore
\begin{equation}\label{eq:testsinside}
\W\subseteq\Athree\subseteq\Vthree.
\end{equation}
The algebra $H$ is usually denoted $S_{53}$ in the three-element
classification; it differs from the flat nilpotent example $S_7$.

For a linear word $w$ over a finite variable set $Y$, write
$\ivec{w}\in\F^Y$ for its incidence vector. Its $x$-coordinate is $1$
when $x\in\con(w)$ and is $0$ otherwise. Recall that the affine span
of vectors $e_1,\ldots,e_t$ over $\F$ consists of sums
$\lambda_1e_1+\cdots+\lambda_te_t$ with
$\lambda_1+\cdots+\lambda_t=1$.

\begin{lemma}\label{lem:tests}
Let $u$ be a linear polynomial and let $r$ be a commutative word.
If $\W\models r\preceq u$, then the following conditions hold:
\begin{enumerate}
\item $r$ is linear and $\con(r)\subseteq\con(u)$;
\item every two distinct variables of $r$ occur together in a word of $u$;
\item some word of $u$ has content contained in $\con(r)$;
\item $\ivec{r}$ belongs to the affine span over $\F$ of the incidence
vectors of the words of $u$.
\end{enumerate}
\end{lemma}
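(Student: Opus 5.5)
We argue by contraposition, one condition at a time. For each of (1)--(4), assuming it fails, we produce an explicit assignment into one of $H$, $D$, $G$ under which the value of $r$ is not below the value of $u$, i.e.\ $u+r\eqid u$ fails. Since all three algebras are commutative, a word evaluates to the product of its letters' values with multiplicity, and a polynomial evaluates to the join of its words' values.

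\emph{Condition (1), via $H$.} First suppose some variable $x\in\con(r)$ is not in $\con(u)$. Send $x\mapsto\infty$ and every other variable to $1$. Then $u$ evaluates to $1$ and $r$ evaluates to $\infty$, which is not below $1$. Next suppose $r$ is not linear, say $x^2$ divides $r$. Send $x\mapsto a$ and all other variables to $1$. Since $u$ is linear, each of its words takes the value $1$ or $a$, so $u\leq a$. But $r$ evaluates to $a^2=\infty$, which is not below $a$.

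\emph{Condition (2), via $H$.} Suppose $x\neq y$ occur in $r$ but never together in a word of $u$. Send $x,y\mapsto a$ and all other variables to $1$. Every word of $u$ contains at most one of $x,y$ and is linear, so its value is $1$ or $a$. Hence $u\leq a$, while $r$ evaluates to $a^2=\infty$.

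\emph{Condition (3), via $D$.} Send each variable of $\con(r)$ to $1$ and every other variable to $0$. Then $r$ evaluates to $1$. A word of $u$ evaluates to $1$ exactly when its content lies in $\con(r)$. If (3) fails, $u$ evaluates to $0$, and $1\not\leq0$.

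\emph{Condition (4), via $G$.} This is the main step. Take an $\mathbb F_2$-linear functional $\lambda$ together with a constant $c$, and assign $x\mapsto g^{\lambda_x}$. For a linear word $w$, the value is $g^{\lambda\cdot\ivec{w}}$. By (1), $r$ is linear, so its value is $g^{\lambda\cdot\ivec{r}}$ as well. In the flat order, $r\preceq u$ holds at such an assignment only if one of two things happens: $u$ evaluates to $\infty$, or all words of $u$ share a common value and $r$ has that same value.

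If $\ivec{r}$ is not in the affine span $A$ of the vectors $\ivec{w}$ for $w$ in $u$, separate them by an affine functional $f(v)=\lambda\cdot v+c$ that is constant, equal to $0$, on $A$ and equals $1$ at $\ivec{r}$. Such an $f$ exists by linear algebra over $\mathbb F_2$: it amounts to separating $\ivec{r}-e_0$ from the linear span of the differences $\ivec{w}-e_0$, where $e_0$ is a fixed vector of $u$.

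Then $\lambda\cdot\ivec{w}=c$ for all words $w$ of $u$, so $u$ evaluates to the single group element $g^{c}$, since there is no $\infty$ arising from distinct summands. Meanwhile $r$ evaluates to $g^{c+1}\neq g^c$, and the two are incomparable. This contradicts $r\preceq u$.

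The only subtlety here is the affine (not linear) separation, which is handled by translating by a fixed word of $u$. Note that $u$ is nonempty, so such a word $e_0$ exists.
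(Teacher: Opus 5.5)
Your proof is correct and matches the paper's argument: $H$ for (1) and (2), $D$ for (3), and for (4) the assignment $x\mapsto g^{\lambda_x}$ in $G$ given by an affine functional that separates $\ivec{r}$ from the affine span. The only difference is cosmetic: you get the separating functional by translating by a fixed word of $u$, while the paper appends a coordinate $1$ and separates $(\ivec{r},1)$ from the span of the vectors $(e_j,1)$; both constructions are standard and equivalent.
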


\begin{proof}
Since $H,D,G$ belong to $\W$, the inequality must hold in each of
these three algebras. We prove the four assertions in order.

\emph{(1)} Suppose first that a variable $x$ of $r$ does not occur
in $u$. In $H$, assign $\infty$ to $x$ and $1$ to all other variables.
Then $r$ has value $\infty$ and $u$ has value $1$, contradicting
$r\leq u$. Now suppose that $x$ is repeated in $r$. Assign $a$ to
$x$ and $1$ to all other variables. The value of $r$ is $\infty$
because $a^2=\infty$. Every word of $u$ is linear, so its value is
$1$ or $a$; the value of their sum is therefore at most $a$.
This again contradicts the inequality.

\emph{(2)} Let $x,y$ be distinct variables of $r$ and suppose that
no word of $u$ contains both. In $H$, assign $a$ to $x,y$ and $1$
to the other variables. The value of $r$ is $\infty$, while every
word of $u$ has value at most $a$. Thus such a pair cannot occur.

\emph{(3)} In $D$, assign $1$ to the variables of $r$ and $0$ to
all the others. Then $r$ has value $1$. A word of $u$ has value $1$
exactly when all its variables occur in $r$. Since addition in $D$
is join and $r\leq u$, at least one word of $u$ must have this property.

\emph{(4)} Work in the vector space on $\con(u)$, which contains
$\con(r)$ by (1). Let $e_1,\ldots,e_t$ be the incidence vectors of
the words of $u$, and put $d=\ivec{r}$. Affine-span membership of
$d$ is equivalent to
\[
(d,1)\in\operatorname{span}_{\F}\{(e_1,1),\ldots,(e_t,1)\}.
\]
If this membership fails, there is a linear functional that vanishes
on the displayed span and takes value $1$ on $(d,1)$: choose a basis
of the span, extend it by $(d,1)$ to a basis, and prescribe the
functional on that basis. Writing it as the scalar product with
$(b,c)$ gives
\[
e_j\cdot b+c=0\quad(1\leq j\leq t),
\qquad d\cdot b+c=1.
\]
In $G$, assign $g^{b_x}$ to each variable $x$. Each word of $u$
then has value $g^c$, so their sum also has value $g^c$.
The value of $r$ is $g^{c+1}$. These are the two distinct group
elements of $G$, whose sum is $\infty$; neither is below the other.
This contradicts $r\preceq u$ and proves (4).
\end{proof}

Only the indicated test algebra is needed for each support
condition: $H$ gives (1) and (2), and $D$ gives (3).
We shall apply these conditions both to the cycle variables and
to the formal variables occurring in a proposed basis identity.

\begin{remark}\label{rem:hypergraphs}
The support conditions explain why the hypergraph criterion in
\cite[Theorem~2.2]{GJRZ2025} does not apply directly to $\Athree$.
Let $\mathbb H$ be a $3$-uniform hypergraph of girth at least five,
and let $t_{\mathbb H}$ be its edge polynomial. If a term $w$ were
absorbed by $t_{\mathbb H}$ in $\Athree$, then Lemma~\ref{lem:tests}(1)--(3),
applied in $H$ and $D$, would force the support of each word of $w$
to be a clique containing an edge of $\mathbb H$.
Such a clique is exactly that edge. Indeed, if an additional
vertex $v$ were adjacent to two vertices $a,b$ of an edge $e$,
choose edges $e_a,e_b$ containing $\{v,a\}$ and $\{v,b\}$.
If $e_a=e_b$, this edge and $e$ form a hypergraph cycle of length
two; otherwise $e,e_a,e_b$ form one of length three.
Both contradict the girth condition.

Thus every word of $w$ is an edge word, up to permutation of its
letters. Under the defining vertex assignment in the commutative
hypergraph semiring of~\cite{GJRZ2025}, all such words take the
same edge value, and so does their sum. Therefore $w$ cannot be
a non-hyperedge term, as required by that criterion.
The family below instead combines words of different lengths
with an anchor.
\end{remark}

\section{Anchored odd-cycle identities}
\label{sec:cycles}

Let $n\geq3$ be odd and choose distinct variables
$x_1,\ldots,x_n,y_1,\ldots,y_n,z$. Indices on the $y$-variables are
read modulo $n$, so $y_{n+1}=y_1$. Define
\begin{equation}\label{eq:cycle}
\begin{aligned}
p_n&=x_1x_2\cdots x_n,&
a_n&=x_1z,&
q_n&=p_nz,\\
b_i&=x_i y_i y_{i+1}z\quad(1\leq i\leq n),&
U_n&=p_n+a_n+\sum_{i=1}^{n}b_i.
\end{aligned}
\end{equation}
The words $b_i$ record the edges $y_i y_{i+1}$ of an odd cycle.
The word $a_n$ supplies an anchor at $x_1$. At this stage these are
ordinary semiring terms: products have their displayed order, and
the sum of the $b_i$ is taken in increasing index order. Put
\[
\sigma_n:\quad U_n+q_n\eqid U_n.
\]
We first give an elementary proof of validity on the test variety.

\begin{lemma}\label{lem:test-valid}
The variety $\W$ satisfies $\sigma_n$ for every odd $n\geq3$.
\end{lemma}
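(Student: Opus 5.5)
The plan is to verify $\sigma_n$ separately in each generator $H$, $D$, $G$. This suffices because $\W=\Var(H,D,G)$ and identities are preserved under products, subalgebras and homomorphic images. In each algebra, $\sigma_n$ says exactly that, under every assignment, the value of $q_n$ lies below the value of $U_n$ in the additive order. Since $U_n$ is a sum containing $p_n$, $a_n$ and all $b_i$, it suffices in each case either to show that $U_n$ takes the top value $\infty$, or to exhibit a summand of $U_n$ whose value is at least that of $q_n$. Multiplication is commutative in all three test semirings, so the displayed order of factors is irrelevant.

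\emph{The algebra $D$.} Multiplication is meet, so $q_n=p_nz$ is below $p_n$, which is a summand of $U_n$. Nothing more is needed.

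\emph{The algebra $H$.} I split according to the value of $z$.
If $z=\infty$, then $a_n=\infty$, so $U_n=\infty$.
If $z=1$, then $q_n=p_n$, again a summand.
If $z=a$, either some $b_i$ equals $\infty$, and then $U_n=\infty$, or every $b_i$ is at most $a$. In the latter case, since $z=a$ and $a^2=\infty$, every $x_i$ and $y_i$ must equal $1$. Then $q_n=a=a_n$.

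\emph{The algebra $G$.} This is the step that uses the odd length of the cycle. If $U_n\neq\infty$, then flatness forces all summands of $U_n$ to take one common value $c\in\{1,g\}$. In particular no variable is $\infty$, and $p_n=a_n=b_i=c$ for all $i$. I then multiply the $n$ equalities $b_i=c$. In the group $\{1,g\}$ every $y_j$ occurs exactly twice, and $g^2=1$, so the product becomes
\[
p_n z^n = c^n .
\]
Because $n$ is odd, $z^n=z$ and $c^n=c$, so $q_n=p_nz=c$. Hence $q_n$ equals every summand and lies below $U_n$. If $U_n=\infty$, there is nothing to prove.

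The only genuine obstacle is this $G$ case, where the cancellation of the squared $y$-variables and the parity of $n$ must be combined. The $H$ case needs only the short case split on $z$, and the $D$ case is immediate.
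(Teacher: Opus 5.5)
Your proof is correct and follows the paper's argument. It uses the same reduction to the generators $D,H,G$, the same one-line meet argument in $D$, and the same product-of-the-$b_i$ argument in $G$, combining $g^2=1$ with the oddness of $n$. The only variation is in $H$: you split on the value of $z$ rather than, as the paper does, on the value of $q_n$. Your split is equally valid and slightly shorter, since when $z=a$ and no $b_i$ equals $\infty$ every $x_i$ must be $1$, giving $q_n=a=a_n$.
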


\begin{proof}
It suffices to check the identity in $D,H,G$. In $D$, multiplication
is meet, so $q_n=p_nz\leq p_n\leq U_n$.

In $H$, the value of $q_n$ lies in $\{1,a,\infty\}$.
If it is $1$, the inequality is automatic because $1$ is the least
element. If it is $a$, exactly one of $x_1,\ldots,x_n,z$ has
value $a$ and the others have value $1$. If this variable is an
$x_i$, then $p_n=a$; if it is $z$, then $a_n=a$.
In either case $U_n\geq a$.
Suppose that $q_n=\infty$. If an $x_i$ has value $\infty$,
then $p_n=\infty$; if $z=\infty$, then $a_n=\infty$.
Otherwise at least two of the variables $x_1,\ldots,x_n,z$
have value $a$. Two such $x$-variables give $p_n=\infty$.
The remaining possibility is $x_i=z=a$ for some $i$, which
gives $b_i=\infty$ regardless of the $y$-values. Thus
$U_n=\infty$ whenever this is required.

In $G$, a variable assigned $\infty$ makes a word of $U_n$
equal to $\infty$, so the inequality holds. We may therefore
assume that all variables take values in the group $\{1,g\}$.
If $U_n=\infty$, there is again nothing to prove. Otherwise
all its summands take one common group value $c$, since
addition is flat. In particular, $b_i=c$ for every $i$.
Evaluating in this commutative group gives
\[
\prod_{i=1}^{n}b_i
=\left(\prod_{i=1}^{n}x_i\right)
 \left(\prod_{i=1}^{n}y_i^2\right)z^n
=p_nz=q_n.
\]
On the other hand, $\prod_i b_i=c^n=c$, since $n$ is odd
and the group has exponent two. Hence $q_n=c=U_n$.
The identity holds in each generator, and therefore in $\W$.
\end{proof}

We next prove validity on every three-element ai-semiring. The
additive semilattice is either a chain or consists of two incomparable
elements and their join. We treat these shapes separately.

\begin{lemma}\label{lem:chain-valid}
Let $S$ be an ai-semiring of order at most three whose additive order
is a chain. Then $S$ satisfies $U_n+q_n\eqid U_n$ for every $n\geq3$,
with the terms defined in~\eqref{eq:cycle}.
\end{lemma}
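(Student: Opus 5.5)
Because the additive order of $S$ is a chain, a sum of elements is simply their maximum. So the identity $U_n+q_n\eqid U_n$ says that, under every evaluation, the value of $q_n=p_nz$ is at most the largest of the values of $p_n$, $a_n=x_1z$ and the $b_i$. Distributivity gives $c(u+v)=cu+cv$, so all left and right translations are order preserving on the chain, i.e.\ $S$ is a totally ordered semigroup with at most three elements. I will argue by contradiction. Fix an evaluation with
\[
q>p,\qquad q>a,\qquad q>b_i\ (1\le i\le n),
\]
and write $s=x_1$, $w=x_2\cdots x_n$. Then $p=sw$, $a=sz$ and $q=swz$. The first two inequalities say that right multiplication by $z$ strictly raises $sw$, and that multiplication by $w$ strictly raises $sz$.

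\emph{Orders one and two.} If $|S|\le2$, the chain is $0<1$, and the strict inequalities force $q=1$ and $p=a=0$. I will then use associativity and monotonicity to get the contradiction $1=swz\le\cdots=0$. When $|S|=1$ nothing needs to be done.

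\emph{Order three.} Label the chain $0<1<2$. Since $q$ is strictly above $p$, we have $q\in\{1,2\}$.
\begin{itemize}
\item If $q=1$, then $p=a=b_i=0$. I will show that some product bounded by one of these words already dominates $q$.
\item If $q=2$, the translations by $z$ and by $w$ each move some element strictly upward. I will enumerate the monotone self-maps of a three-chain that can occur as translations in an associative ordered semigroup. The aim is to prove a power-type inequality, for instance $c\,d\,e\le\max(c\,d,\;c\,e)$ under suitable hypotheses, or that $z$ acting above $s$ forces $sz\ge swz$.
\end{itemize}
If a general argument stalls, the fallback is to use the classification of three-element ai-semirings with chain addition and check the identity on each case. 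Those semirings form a short list among the $61$ types in~\cite{ZRCSD}.

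\emph{Role of the words $b_i$.} I expect most cases to close using $p$ and $a$ alone. The $b_i$ are needed only when $s$ and $z$ are not comparable to the other factors in a way that allows $x_1$ to be absorbed. In those cases the plan is to bound $q$ via some $b_i=x_iy_iy_{i+1}z$, using that $z$ and an $x_i$ whose value is high together already produce the top element, much as in the $H$ case of Lemma~\ref{lem:test-valid}.

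\emph{Main obstacle.} The hard step is the order-three case with $q$ the top element while both $p$ and $a$ sit in the middle. Here I must show that either two factors among $x_1,\dots,x_n,z$ already multiply to the top, or that some $x_iz$ with $i\ne1$ reaches it; the $b_i$ then absorb $q$ by monotonicity, since they contain $x_iz$ as a factor. I will first try to derive this from associativity together with the nilpotent or idempotent structure that a three-element ordered semigroup must have, using the sixth-power behaviour mentioned in the abstract. Only if that fails will I fall back on inspecting the finite list.
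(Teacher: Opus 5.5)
Your proposal has a genuine gap. The substantive case is left as a programme, and the mechanism you sketch for it is not valid. The easy cases are also only promised, although they follow in one line. Since $x_1z<x_1wz$ and right multiplication by $z$ is monotone, $x_1<x_1w=p<q$ is a strict three-element chain. This excludes $|S|\le2$ and $q=1$, and otherwise forces $x_1=0$, $p=U_n=1$, $1z=2$ and $0z\le1$.

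For this remaining configuration you plan to make some $x_iz$ equal to the top and then let $b_i$ absorb $q$ ``since $b_i$ contains $x_iz$ as a factor''. That step is unavailable. Multiplication is not assumed commutative, so $x_iz$ is not a factor of $x_iy_iy_{i+1}z$. Even in a commutative chain ai-semiring, the inserted letters can lower the product. Take the chain $0<1<2$ with $0$ multiplicatively absorbing and $xy=\max(x,y)$ for nonzero $x,y$; putting $y_i=0$ gives $b_i=0<x_iz$. The absorption you have in mind works in $H$ (as in Lemma~\ref{lem:test-valid}) only because its least element is a multiplicative identity. The fallback to the classification is not carried out, and it would still need, for each algebra, an argument uniform in $n$. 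The sixth-power lemma concerns additive reducts and plays no role here.

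The missing idea is to use the $b_i$ as \emph{upper} bounds, with the $y$'s pushed down to the least element.
\begin{enumerate}
\item Monotonicity gives $(x_i00)z\le b_i\le U_n=1$. Since $1z=2$, this forces $x_i00=0$ for all $i$.
\item With $i=1$ this gives $000=0$. Since $0\le00$, right multiplication by $0$ yields $00\le000=0$, so $00=0$. Hence $x_i0=x_i(00)=0$ for every $i$.
\item Next show that $t0=0$ implies $0t=0$ in this configuration. If $0t>0$, then $t\ge1$, so $10\le t0=0$. Combining this with $1z=2$ and $0z\le1$, a short associativity case analysis on $0t\in\{1,2\}$ gives a contradiction.
\item Applying step (3) successively to $x_2,\dots,x_n$ gives $p=0x_2\cdots x_n=0$, contradicting $p=1$.
\end{enumerate}
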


\begin{proof}
Multiplication is order preserving in each argument, by distributivity.
Fix an evaluation, and use the variable and term symbols also for
their values. Suppose, towards a contradiction, that $q_n>U_n$.
Write $P=x_1\cdots x_n$. Since
\[
P\leq U_n<Pz,\qquad x_1z\leq U_n<Pz,
\]
monotonicity and the chain order give $x_1<P<Pz$. This is impossible
if $|S|\leq2$. Otherwise label its elements $0<1<2$; these are order
labels, not designated constants of the semiring. Necessarily
\begin{equation}\label{eq:chain-counterexample}
x_1=0,\qquad P=U_n=1,\qquad 1z=2,\qquad 0z\leq1.
\end{equation}
For each $i$, monotonicity gives
\[
(x_i00)z\leq x_i y_i y_{i+1}z=b_i\leq1.
\]
If $x_i00\geq1$, then the left side is at least $1z=2$.
Thus $x_i00=0$ for every $i$. Taking $i=1$ gives $000=0$.
As $0\leq00$, right multiplication by $0$ yields
$00\leq000=0$, so $00=0$. It follows that $x_i0=0$ for all $i$.

We claim that, under~\eqref{eq:chain-counterexample}, every element
$t$ with $t0=0$ also satisfies $0t=0$. Suppose instead that
$w=0t>0$. Then $t\geq1$, since $00=0$, and monotonicity gives
$10\leq t0=0$. Associativity now gives
\[
10=0,\qquad w^2=0(t0)t=00t=w,\qquad 0w=00t=w.
\]
If $w=1$, these equations imply $11=01=1$. The equations
$10=0$ and $1z=2$ force $z=2$, and hence $12=2$.
But then
\[
02=0(12)=(01)2=12=2,
\]
contrary to $0z\leq1$.

If $w=2$, then $02=2$. The inequality $0z\leq1$ excludes $z=2$,
and $10=0$ together with $1z=2$ excludes $z=0$. Therefore
$z=1$, $11=2$, and $01\leq1$. By associativity,
\[
(01)1=0(11)=02=2.
\]
The value $01=0$ is impossible in this equality, so $01=1$.
This gives the contradiction
\[
(10)1=01=1\ne2=11=1(01).
\]
The claim follows. Applying it to each $x_i$ gives $0x_i=0$.
Since $x_1=0$, we obtain $P=x_1\cdots x_n=0$, contradicting
$P=1$ in~\eqref{eq:chain-counterexample}.
\end{proof}

The next lemma reduces one part of the non-chain case to the chain case.

\begin{lemma}\label{lem:flat-quotients}
Let $S$ be a three-element ai-semiring whose additive semilattice
consists of incomparable elements $a,b$ and their join $\top$.
If $\top$ is not a two-sided multiplicative zero, then $S$ embeds
in a direct product of two ai-semirings of order two.
\end{lemma}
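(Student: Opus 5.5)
The plan is to realise $S$ inside $\{0,1\}^2$ through its two prime filters. Put $F_a=\{a,\top\}$ and $F_b=\{b,\top\}$, and let $f_a,f_b\colon S\to\{0,1\}$ be their characteristic maps. First I check that each map is a homomorphism of additive semilattices onto the two-element chain. This amounts to $a$ (and symmetrically $b$) being join-prime in $\{a,b,\top\}$: a sum $x+y$ lies above $a$ exactly when $x$ or $y$ does, which follows from $a+b=\top$ and idempotence. The pair $(f_a,f_b)$ is injective, since $a\mapsto(1,0)$, $b\mapsto(0,1)$ and $\top\mapsto(1,1)$. It therefore suffices to show that the kernels $\theta_a$ (classes $\{a,\top\},\{b\}$) and $\theta_b$ (classes $\{b,\top\},\{a\}$) are compatible with multiplication. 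The two quotients $S/\theta_a$ and $S/\theta_b$ are then two-element ai-semirings, and $S$ embeds in their product.

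Compatibility is checked one translation at a time. By distributivity, each left translation $\lambda_s\colon x\mapsto sx$ and each right translation $\rho_s\colon x\mapsto xs$ is a join endomorphism of $\{a,b,\top\}$. Such a map is determined by the images of $a$ and $b$, with $\top\mapsto f(a)+f(b)$. Running through the nine possibilities, an endomorphism $f$ fails to preserve $\theta_a$ only when $f(a)=b$ and $f(b)\neq b$. Symmetrically, $f$ fails to preserve $\theta_b$ only when $f(b)=a$ and $f(a)\neq a$. The bad translations are therefore exactly of two kinds: the swap $a\leftrightarrow b$, which fixes $\top$, and maps sending one atom to the other atom and the other atom to $\top$, which also fix $\top$.

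The main step is to show that no translation is bad once $\top$ fails to be a two-sided zero. I would argue by contradiction from associativity. Suppose, say, $\lambda_s$ is bad, so $sa,sb\in\{a,b,\top\}$ with $\{sa,sb\}\ni$ an atom moved off itself and $s\top=\top$. By hypothesis there is some $t$ with $t\top\neq\top$ or $\top t\neq\top$. Because $\top$ is the join of $a$ and $b$, this forces some atom to have a product with $t$ that is again an atom. I would then compare the relevant products in two ways, for example $(st)x$ with $s(tx)$, and $t(s\top)$ with $(ts)\top$, together with the values of $s^2$ and $s^3$. The aim is to force a translation to be simultaneously of swap type and of non-bijective type, or to force $\top$ to be absorbing.

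I expect this case analysis to be the real obstacle, since there are several subcases according to whether $s\in\{a,b,\top\}$ and which side is bad. If some configuration survives, for instance a genuine swap realised by an atom with $\top$ absorbing only on one side, my fallback is to replace one of $\theta_a,\theta_b$. I would try the congruence with classes $\{a,b\},\{\top\}$, or a Rees-type quotient, and check whether together with the surviving kernel it still separates the points of $S$.
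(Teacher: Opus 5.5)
Your reduction is sound and is the same as the paper's. $\theta_a$ and $\theta_b$ are additive congruences meeting in equality, each translation is a join endomorphism, and your list of bad endomorphisms is correct: the swap, $a\mapsto b,\ b\mapsto\top$, and $a\mapsto\top,\ b\mapsto a$.

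The gap is that the step carrying the content of the lemma is never proved. That step is showing that no left or right translation is bad. You only announce it (``I would argue by contradiction from associativity'') and allow that some configuration might survive.

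Your fallback cannot help in that event. The partition $\{a,b\},\{\top\}$ is not compatible with addition, since $a\sim b$ would give $a=a+a\sim a+b=\top$. In fact $\theta_a$ and $\theta_b$ are the only two-class additive congruences of $\{a,b,\top\}$. An embedding into a product of two two-element algebras yields two homomorphisms that jointly separate points, each necessarily onto a two-element quotient. The lemma is therefore equivalent to the multiplicative compatibility of $\theta_a$ and $\theta_b$, so there is no way around the open step.

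The missing idea is that the hypothesis produces a one-sided multiplicative zero. This is much stronger than your observation that some atom times $t$ is again an atom.

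\begin{itemize}
\item Suppose $\top t$ is an atom, and call it $b$. Then $st\le\top t=b$ forces $st=b$ for every $s$, so $\rho_t$ is constant. Associativity gives $xb=x(\top t)=(x\top)t=b$ for all $x$.
\item If instead $t\top$ is an atom, pass to the opposite semiring, which has the same set of translations.
\item Every bad map moves both atoms, and each $\lambda_x$ fixes $b$. So no left translation is bad.
\item A bad $\rho_y$ requires $ay=b$ or $by=a$. Using $ab=bb=b$, one checks that $\rho_y$ is then constant:
\begin{itemize}
\item If $aa=b$, then $ba=(aa)a=a(aa)=ab=b$, hence $\top a=b$.
\item If $a\top=aa+b=b$, this forces $aa=b$, hence $b\top=ba+bb=b$ and $\top\top=b$.
\item $by=a$ fails for $y=b$, since $bb=b$, and for $y=\top$, since $b\top=ba+b\ge b$.
\item If $ba=a$, then $a=(ab)a=a(ba)=aa$, hence $\top a=a$.
\end{itemize}
\end{itemize}

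This is how the paper closes the argument. Without it, your proposal establishes only the reduction, not the lemma.
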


\begin{proof}
Suppose first that $\top y$ is a minimal element for some $y\in S$,
and name this element $b$. For every $s\in S$, monotonicity gives
$sy\leq\top y=b$. Since $b$ is minimal, $sy=b$.
Consequently, for every $x\in S$,
\[
xb=x(\top y)=(x\top)y=b.
\]
Thus $b$ is a right zero for multiplication. If instead an element
$x\top$ is minimal, the same argument in the opposite semiring
gives a left zero. It suffices to treat the right-zero case:
the additive relations constructed below are also congruences for
the original multiplication whenever they are congruences for its
opposite.

Consider the two equivalence relations
\[
\theta_a=\bigl\{\{a,\top\},\{b\}\bigr\},\qquad
\theta_b=\bigl\{\{b,\top\},\{a\}\bigr\}.
\]
Both are additive congruences. We show that every left and right
multiplication preserves them. A join-preserving map on this
semilattice whose value at $\top$ is minimal is constant.
For a nonconstant join-preserving map, the value at $\top$ is
$\top$; such a map preserves both displayed relations if it sends
each of $a,b$ either to itself or to $\top$.

Left multiplication by $x$ fixes $b$, since $xb=b$. If it is
nonconstant, it fixes $\top$, and its value at $a$ cannot be $b$:
otherwise $x\top=xa+xb=b$. Hence every left multiplication
preserves both relations.

For right multiplication by $y$, we show that any map sending
one minimal element to the other must be constant. Right multiplication
by $b$ is already constant. Suppose that $ay=b$. If $y=a$,
then $aa=b$, and associativity gives
\[
ba=(aa)a=a(aa)=ab=b.
\]
Thus $\top a=aa+ba=b$, so right multiplication by $a$ is
constant. If $y=\top$, then
$b=a\top=aa+ab=aa+b$ forces $aa=b$. The preceding calculation
gives $ba=b$, and therefore $b\top=ba+bb=b$. Together with
$a\top=b$, this shows that right multiplication by $\top$
is constant.

Now suppose that $by=a$. The cases $y=b$ and $y=\top$ are
impossible: $bb=b$, and $b\top=ba+bb\geq b$.
If $y=a$, then $ba=a$, and
\[
a=(ab)a=a(ba)=aa.
\]
It follows that
$\top a=aa+ba=a$, and right multiplication by $a$ is constant.
We have excluded every nonconstant map sending $a$ to $b$ or $b$ to $a$.
Every right multiplication therefore preserves both relations.

Thus $\theta_a$ and $\theta_b$ are semiring congruences.
Their intersection is equality, so
\[
S\longrightarrow S/\theta_a\times S/\theta_b,\qquad
s\longmapsto([s]_{\theta_a},[s]_{\theta_b}),
\]
is an injective homomorphism. Each quotient has two elements,
and its addition is a chain semilattice.
\end{proof}

\begin{proposition}\label{prop:valid}
Every ai-semiring of order at most three satisfies $\sigma_n$
for every odd $n\geq3$. Consequently,
$\Athree\models\sigma_n$ for all such $n$.
\end{proposition}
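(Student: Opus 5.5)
We argue by cases on the additive semilattice of an ai-semiring $S$ with $|S|\leq 3$.

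\emph{Chain case.} If $|S|\leq2$, or if $|S|=3$ and the additive order is a chain, then Lemma~\ref{lem:chain-valid} applies directly and gives $S\models\sigma_n$.

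\emph{Non-chain case.} Otherwise $|S|=3$, and the additive semilattice consists of two incomparable elements $a,b$ together with their join $\top$.

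If $\top$ is not a two-sided multiplicative zero, Lemma~\ref{lem:flat-quotients} embeds $S$ into a direct product of two ai-semirings of order two. Each factor has a chain order, so each satisfies $\sigma_n$ by Lemma~\ref{lem:chain-valid}. Identities are preserved under direct products and subalgebras, so $S\models\sigma_n$.

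\emph{Remaining case: $\top$ is a multiplicative zero.} Fix an evaluation and suppose the value of $U_n$ is not $\top$. Then every summand of $U_n$ takes one common minimal value, say $m\in\{a,b\}$, since any two distinct summand values would join to $\top$. In particular $p_n=a_n=b_i=m$ for all $i$. No variable among $x_1,\dots,x_n,y_1,\dots,y_n,z$ can take the value $\top$, since $\top$ is absorbing and would make the corresponding summand equal to $\top$. So all these variables take values in $\{a,b\}$.

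We must show $q_n=p_nz=m$. We already have $p_nz=mz$ and $x_1z=m$, so the task is to derive $mz=m$ from the equations $b_i=x_iy_iy_{i+1}z=m$ and $x_1\cdots x_n=m$.

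The plan is to show that $T=\{a,b\}$ with the restricted multiplication behaves well enough. If $T$ is closed under multiplication, then $T$ is a two-element semigroup. We classify the possibilities: left zero, right zero, semilattice, group $\mathbb Z_2$, null, and so on. The flat addition constraint rules out some products; for instance, if $xy=\top$ for $x,y\in T$, then that product is simply never realised in this evaluation, and this only restricts the evaluation further.

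\begin{itemize}
\item[$\bullet$] In the group case, we repeat the parity computation from Lemma~\ref{lem:test-valid}: the product of all $b_i$ equals $q_n$ because $n$ is odd and the group has exponent two, and it also equals $m^n=m$.
\item[$\bullet$] In the band and null cases, we compute directly. For example, if $T$ is left zero then $q_n=x_1=m$. If $T$ is right zero then $q_n=z=b_1=m$. In a semilattice, $q_n=p_n\wedge z$ and $m\leq z$ follows from $x_1z=m$.
\end{itemize}

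The main obstacle is this case analysis when $\{a,b\}$ is not closed under multiplication, that is, when some products of minimal elements equal $\top$. The approach we would try first is to enumerate the multiplication tables on $\{a,b\}$ that are consistent with associativity, with distributivity over the flat join, and with $\top$ being absorbing. The goal is to verify that each surviving table admits no evaluation with all $b_i=m$, $x_1z=m$, $p_n=m$ and $p_nz\neq m$. In these tables, whenever a partial product is defined it is either a semigroup of the listed kinds, or the anchor $x_1z=m$ together with the cycle words forces $mz=m$ exactly as in the chain argument.

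Once every ai-semiring of order at most three is known to satisfy $\sigma_n$, the last claim is immediate. The class of algebras satisfying $\sigma_n$ is a variety containing all generators of $\Athree$, so $\Athree\models\sigma_n$ for every odd $n\geq3$.
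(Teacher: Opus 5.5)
\textbf{The gap is in the final case.} Your chain case, and your non-chain case with $\top$ not a two-sided zero, are the same as the paper's. The remaining case, where $\top$ is a multiplicative zero, is where all the content of the proposition lies, and you do not prove it. You correctly reduce to showing $mz=m$. After that you only propose to ``enumerate the multiplication tables'' and assert that each surviving table is either one of your listed semigroups or is handled ``exactly as in the chain argument''. Neither claim is carried out. The proof of Lemma~\ref{lem:chain-valid} depends on the chain inequalities $x_1<P<Pz$, which have no analogue for a flat semilattice. Your remark that products equal to $\top$ are ``never realised'' also does not help. The word $q_n=p_nz$ is not a subproduct of any summand of $U_n$, so $mz=\top$ is exactly the possibility that has to be excluded.

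\textbf{The missing tool is a cancellation law.} Distributivity over the flat join with absorbing top gives: if $xy=xz\neq\top$ then $y=z$, since otherwise $xy+xz=x(y+z)=x\top=\top$; the same holds on the right. This already rules out your left-zero, right-zero, semilattice and null tables on $\{a,b\}$. For instance, in a left-zero table $a\top=aa+ab=a\neq\top$. So the only multiplicatively closed case is the group, i.e.\ $S\cong G$.

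For the non-closed tables, the paper splits by the number of idempotents other than $\top$:
\begin{enumerate}
\item \emph{None.} Every element is nilpotent, which forces $ab=ba=\top$ and $a^3=b^3=\top$. Hence every product of three elements is $\top$, and $p_n=U_n=\top$ because $n\ge 3$.
\item \emph{Two, say $e,f$.} Cancellation gives $ef=fe=\top$. A non-$\top$ product is then a power of a single idempotent, so $p_n=a_n=c$ forces $x_i=z=c$ for all $i$, and $q_n=c$.
\item \emph{Exactly one, $e$, with $t^2=\top$.} Here $et,te\in\{t,\top\}$, and any product containing two $t$'s is $\top$. If $U_n=t$, exactly one $x_i$ equals $t$. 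The edge word $b_i$ at that index rules out $z=t$, so $z=e$. Then $a_n=x_1e=t$ gives $x_1=t$ and $te=t$, hence $q_n=t$.
\end{enumerate}
The third case genuinely uses the cycle words $b_i$ and the anchor. Without an argument of this kind, the case where $\top$ is a multiplicative zero remains open, and the proposition is not proved.
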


\begin{proof}
The chain case follows from Lemma~\ref{lem:chain-valid}.
Every remaining additive semilattice has three elements and is
of the form in Lemma~\ref{lem:flat-quotients}. If its top element
is not a two-sided multiplicative zero, that lemma embeds the
semiring into a product of two chain ai-semirings of order two.
The identity holds in both factors by Lemma~\ref{lem:chain-valid},
and hence in the given semiring.

It remains to consider the case in which the additive top is
a two-sided multiplicative zero. In this part of the proof denote
it by $0$, and write $S=\{a,b,0\}$, where $a+b=0$.
The symbol $0$ thus denotes the greatest additive element here.
Distributivity implies cancellation whenever a product is nonzero:
\begin{equation}\label{eq:zero-cancellation}
xy=xz\ne0\ \Longrightarrow\ y=z,\qquad
yx=zx\ne0\ \Longrightarrow\ y=z.
\end{equation}
Indeed, if $y\ne z$ then $y+z=0$, so
$xy+xz=x(y+z)=x0=0$, contradicting $xy=xz\ne0$.
The other implication follows in the same way.

We distinguish the number of nonzero multiplicative idempotents.

\emph{No nonzero idempotent.}
Every element is nilpotent, because its finite cyclic subsemigroup
contains an idempotent, which must be $0$. We have $ab=ba=0$.
To prove the first equality, $ab=a$ would imply $ab^k=a$ for
every $k\geq1$, contradicting nilpotence of $b$; and $ab=b$
would imply $a^k b=b$, contradicting nilpotence of $a$.
The proof for $ba$ is identical. Also $a^2\in\{b,0\}$ and
$b^2\in\{a,0\}$, so $a^3=b^3=0$. Any product of three
elements is therefore zero: a word containing both $a$ and $b$
has an adjacent mixed pair, and a word with only one nonzero
letter has cube zero. Since $n\geq3$, we have $p_n=0$, hence
$U_n=0$, and $\sigma_n$ follows.

\emph{Two nonzero idempotents.}
Write them as $e,f$. By~\eqref{eq:zero-cancellation},
$ef=e$ would imply $ee=ef\ne0$ and $e=f$; similarly $ef=f$
would imply $ef=ff\ne0$ and $e=f$. Thus $ef=fe=0$.
Fix an evaluation. If $U_n=0$, the required inequality is
automatic. Otherwise all its summands have the same nonzero
value $c$. A nonzero product consists entirely of one of
the idempotents, so $p_n=c$ forces $x_i=c$ for every $i$.
Then $a_n=x_1z=c$ forces $z=c$, and consequently $q_n=c$.

\emph{Exactly one nonzero idempotent.}
Write $S=\{e,t,0\}$ with $e^2=e$ and $t^2\ne t$.
By~\eqref{eq:zero-cancellation}, neither $et$ nor $te$ can
equal $e$, so $et,te\in\{t,0\}$. Also $t^2\in\{e,0\}$.
If $t^2=e$, then $t^4=e$, and
$t^3=et=te\in\{t,0\}$. The value $t^3=0$ would force
$t^4=0$, so $et=te=t$. Thus $\{e,t\}$ is the group of
order two, and $S$ is isomorphic to $G$. Its odd-cycle
identity was proved in Lemma~\ref{lem:test-valid}.

Suppose finally that $t^2=0$. A product containing $0$ is
zero, and a product containing at least two occurrences of
$t$ is also zero. For the latter assertion, take two
consecutive occurrences of $t$. The intervening letters,
if any, are $e$; as $e^2=e$ and $te\in\{t,0\}$, their
product with the two occurrences of $t$ is zero. Hence a
nonzero product has either only $e$'s, with value $e$, or
exactly one $t$, with value $t$.

Again we may assume that $U_n=c\ne0$. If $c=e$, then
$p_n=e$ forces every $x_i=e$, and $a_n=e$ forces $z=e$.
Thus $q_n=e$. If $c=t$, then $p_n=t$ forces exactly one
$x_i=t$ and all the others to equal $e$. The value $z=0$
is impossible. If $z=t$, the edge word $b_i$ corresponding
to $x_i=t$ contains two occurrences of $t$, so $b_i=0$,
a contradiction. Therefore $z=e$. Now $a_n=x_1e=t$ forces
$x_1=t$ and $te=t$, so $q_n=p_ne=te=t$.

These cases exhaust all possibilities and establish $\sigma_n$
on every generator of $\Athree$. Preservation of identities
under homomorphic images, subalgebras, and direct products
completes the proof.
\end{proof}

\begin{remark}
The oddness assumption is essential. In $G$, set $n=4$ and assign
\[
(x_1,x_2,x_3,x_4)=(g,g,1,1),\qquad
(y_1,y_2,y_3,y_4)=(1,1,1,g),\qquad z=g.
\]
Then $p_4=a_4=b_1=b_2=b_3=b_4=1$, whereas $q_4=g$.
Thus $U_4+q_4=\infty\ne1=U_4$.
\end{remark}

\section{An isolated polynomial equivalence class}
\label{sec:isolation}

From now until the end of Section~\ref{sec:nfb}, the terms in
\eqref{eq:cycle} are regarded as commutative polynomials.
This is justified by the use of the commutative test variety $\W$.
No multiplicative commutativity assumption is made on $\Athree$
or $\Vthree$.

\begin{lemma}[Isolation]\label{lem:isolation}
Let $n\geq5$ be odd. If $\W\models r\preceq U_n$ for a
commutative word $r$, then
\[
r\in\{p_n,a_n,b_1,\ldots,b_n,q_n\}.
\]
Moreover, the equivalence class of $U_n$ under the identities of
$\W$, in the free commutative ai-semiring, is exactly
$\{U_n,U_n+q_n\}$.
\end{lemma}

\begin{proof}
We first determine the possible words below $U_n$ and then show
that every word originally present in $U_n$ is indispensable.

By Lemma~\ref{lem:tests}, $r$ is linear and uses only variables of
$U_n$. Form a graph $\Gamma_n$ with these variables as vertices,
joining two distinct vertices when they occur together in a word
of $U_n$. Conditions (2) and (3) of that lemma say that
$\con(r)$ is a clique containing the content of at least one word
of $U_n$.

The $x$-vertices form a clique, and $z$ is adjacent to every
$x$- and $y$-vertex. The only $y$-neighbours of $x_i$ are
$y_i,y_{i+1}$. Among the $x$-vertices, the neighbours of $y_i$
are precisely $x_{i-1},x_i$; among the $y$-vertices, its neighbours
are $y_{i-1},y_{i+1}$. We use this description in the following
three cases.

\emph{Case 1: $\con(p_n)\subseteq\con(r)$.}
No $y_i$ is adjacent to every $x$-vertex, since $n\geq5$ and
$y_i$ has only two $x$-neighbours. Hence the only additional
vertex that can occur in the clique $\con(r)$ is $z$.
Linearity now gives $r=p_n$ or $r=q_n$.

\emph{Case 2: $\con(b_i)\subseteq\con(r)$ for some $i$.}
The common $x$-neighbour of $y_i,y_{i+1}$ is only $x_i$.
Also, no additional $y$-vertex is adjacent to both $y_i$ and
$y_{i+1}$ in a cycle of length at least five. Thus the clique
$\{x_i,y_i,y_{i+1},z\}$ is maximal, and $r=b_i$.

\emph{Case 3: $\{x_1,z\}\subseteq\con(r)$.}
Any $y$-variables in $r$ must lie in $\{y_1,y_2\}$, because
they must be adjacent to $x_1$. Every incidence vector of a
word of $U_n$ has an even sum of its $y$-coordinates.
This property is preserved by linear combinations over $\F$,
so Lemma~\ref{lem:tests}(4) gives the same property for
$\ivec{r}$. Thus either both $y_1,y_2$ occur or neither occurs.
If both occur, their only common $x$-neighbour is $x_1$,
and no further $y$-variable is allowed. Hence $r=b_1$.

Suppose that no $y$-variable occurs in $r$. By affine-span
membership, there are $\alpha,\beta,\gamma_1,\ldots,\gamma_n\in\F$
such that
\begin{equation}\label{eq:affine}
\begin{aligned}
\ivec{r}
 &=\alpha\ivec{p_n}+\beta\ivec{a_n}
       +\sum_{i=1}^{n}\gamma_i\ivec{b_i},\\
\alpha+\beta+\sum_{i=1}^{n}\gamma_i&=1.
\end{aligned}
\end{equation}
The $y_i$-coordinate on the left is zero. On the right, it is
$\gamma_{i-1}+\gamma_i$, with $\gamma_0=\gamma_n$.
Thus
\[
\gamma_1=\gamma_2=\cdots=\gamma_n=\gamma.
\]
Since $n$ is odd, $\sum_i\gamma_i=\gamma$ in $\F$.
Since $z$ occurs in $r$, comparison of the $z$-coordinates
and the coefficient condition in \eqref{eq:affine} give
\[
\beta+\gamma=1,\qquad \alpha+\beta+\gamma=1.
\]
It follows that $\alpha=0$. If $(\beta,\gamma)=(1,0)$,
then $\ivec{r}=\ivec{a_n}$ and $r=a_n$.
If $(\beta,\gamma)=(0,1)$, then
$\ivec{r}=\sum_i\ivec{b_i}$.
In this sum every $y_i$ occurs twice, every $x_i$ once, and
$z$ occurs an odd number of times. Hence this vector is
$\ivec{q_n}$ and $r=q_n$. This completes the list of possible
words, because condition (3) of Lemma~\ref{lem:tests} ensures
that at least one of the three cases applies.

Now let $\W\models v\eqid U_n$. Each word of $v$ is below
$v$ and therefore below $U_n$, so every word of $v$ belongs
to the displayed list. We claim that $p_n,a_n,b_1,\ldots,b_n$
must all occur in $v$.

If $p_n$ were absent, assign $1$ to all $x_i$ and $0$ to
$z$ and all $y_i$ in $D$. Then $U_n$ has value $1$,
whereas every remaining listed word contains $z$ and has value
$0$. If $a_n$ were absent, assign $1$ only to $x_1,z$ in
$D$. The word $a_n$ has value $1$, but every other listed
word contains a variable assigned $0$. Either omission would
contradict $v\eqid U_n$.

Finally, if $b_i$ were absent, assign $a$ to
$y_i,y_{i+1}$ and $1$ to all other variables in $H$.
The word $b_i$ has value $\infty$, while every other listed
word contains at most one of $y_i,y_{i+1}$ and has value
$1$ or $a$. Thus $U_n$ and $v$ would again have different
values. This proves the claim.

Consequently, $v$ is either $U_n$ or $U_n+q_n$.
Both possibilities do occur: the first trivially, and the
second by Lemma~\ref{lem:test-valid}. Therefore the equivalence class has
exactly the two asserted members.
\end{proof}

\begin{remark}
The four conditions in Lemma~\ref{lem:tests} are used as necessary
conditions. We do not need a general converse to that lemma.
For the extra word $q_n$ in the present equivalence class,
sufficiency follows from Lemma~\ref{lem:test-valid}.
\end{remark}

\section{The unbounded-variable obstruction}
\label{sec:nfb}

The next lemma concerns an arbitrary absorption identity valid in
$\W$. Its substitution values may be polynomials, so it applies
to deductions from any proposed basis.

\begin{lemma}[First-step obstruction]\label{lem:step}
Let $n\geq5$ be odd. Suppose that an absorption identity
$s+r\eqid s$ valid in $\W$ has a contextual application
\eqref{eq:step} from $U_n$ to $U_n+q_n$. Then $r$ contains
at least $n+1$ distinct variables.
\end{lemma}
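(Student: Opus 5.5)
The plan is to analyse the single step \eqref{eq:step} at the level of words and then use the isolation lemma to control every substitution value that can occur. Write the step as $K\varphi(s)+R=U_n$ and $K\varphi(s)+K\varphi(r)+R=U_n+q_n$. By Lemma~\ref{lem:isolation}, every word of $K\varphi(s)$, of $K\varphi(r)$ and of $R$ lies in the list $\{p_n,a_n,b_1,\ldots,b_n,q_n\}$. Since $q_n$ is absent from $U_n$, it must come from $K\varphi(r)$.

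Writing $r=t_1\cdots t_m$, we therefore obtain
\[
q_n=k\,w_1\cdots w_m ,
\]
where $k$ is a word of $K$ (or is absent) and each $w_j$ is a word of $\varphi(t_j)$. The letters $x_1,\ldots,x_n,z$ of $q_n$ are thus distributed among $k,w_1,\ldots,w_m$. To get the bound $m\geq n+1$, with distinct variables, I will show three things: $k$ is absent; each $w_j$ is a single letter; and no variable of $r$ is repeated. The last point is immediate, because $q_n$ is linear and each $w_j$ is nonempty.

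For the first two points I will exploit the words of $s$. By Lemma~\ref{lem:tests}(1)--(3), applied after first transferring linearity, we have $\con(r)\subseteq\con(s)$, and some word $s_0$ of $s$ has $\con(s_0)\subseteq\con(r)$. Every word of $K\varphi(s)$ is one of $p_n,a_n,b_i$. Choosing, for each variable of $s_0$, the same word $w_j$ of $\varphi(t_j)$ as above, the word $k\prod_{t_j\in s_0}w_j$ is a word of $K\varphi(s)$. It is also a proper subword of $q_n$, because it lies in $U_n$ and is not $q_n$. Its content is contained in $\{x_1,\ldots,x_n,z\}$, so it must be $p_n$ or $a_n$.

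Now consider two distinct variables $t_j,t_l$ of $r$. By Lemma~\ref{lem:tests}(2) they occur together in some word $s'$ of $s$. Any choice of words from the substitution values turns $k\varphi(s')$ into a word of $U_n$ that contains $w_jw_l$. I will vary the choice of words in $\varphi(t)$ for the other variables $t$ of $s'$, and use the explicit shape of the list, in particular the fact that only the $b_i$ contain $y$-letters. From this I will derive that $k$ cannot contain $z$ together with any $x_i$, and that no $w_j$ can contain two letters. The argument is a case analysis according to whether a merged block contains $z$, and it mirrors the clique argument of Lemma~\ref{lem:isolation}.

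I expect this merging exclusion to be the main obstacle. Conditions (1)--(3) alone do not seem to forbid a variable of $r$ from carrying a block such as $x_ix_j$. If the $H$- and $D$-constraints fail to rule it out, I will fall back on the parity condition, Lemma~\ref{lem:tests}(4), evaluated in $G$. The idea is to pull back the odd-cycle obstruction of Lemma~\ref{lem:test-valid}. If fewer than $n+1$ variables of $r$ carry the letters $x_1,\ldots,x_n,z$, then $s+r\eqid s$ should already follow from a configuration in which the odd cycle $y_1,\ldots,y_n$ is not fully traversed by the words of $\varphi(s)$. The four-cycle remark shows the kind of $G$-assignment that breaks such an identity. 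Concretely, I will construct a $G$-assignment to the variables of $s$ under which every word of $s$ takes a common value $g^c$ while $r$ takes the value $g^{c+1}$, contradicting validity in $\W$.
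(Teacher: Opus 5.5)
Your setup is right: the factorisation $q_n=k\,w_1\cdots w_m$, and the fact that for distinct variables $t_j,t_l$ of $r$ Lemma~\ref{lem:tests}(2) yields a word of $U_n$ containing $\con(k)\cup\con(w_j)\cup\con(w_l)$, are exactly the ingredients needed. But the proposal stops where the proof has to happen. You defer the exclusion of a present $k$ and of multi-letter blocks to a case analysis you do not carry out, and you doubt that conditions (1)--(3) suffice. Your fallback is a $G$-assignment that is never constructed. It only restates a failure of Lemma~\ref{lem:tests}(4), with no reason why a merged block would put $\ivec{r}$ outside the affine span.

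The missing idea is one structural fact: every word of $U_n$ containing $z$ contains exactly one $x_i$, since these words are $a_n,b_1,\ldots,b_n$. The feature you single out, that only the $b_i$ contain $y$-letters, is not the relevant one. With this fact the pair witness finishes everything:
\begin{enumerate}
\item If $z\in\con(k)$, two distinct variables of $r$ carry disjoint nonempty blocks of $x$'s, so the witness contains $z$ and two distinct $x$'s.
\item If $k$ is present but does not contain $z$, pair the variable whose block contains $z$ with any other variable. The witness then contains $z$, an $x$ from $k$, and a different $x$ from the second block.
\item Hence $k$ is absent. The block containing $z$ must be exactly $z$, and pairing it with each remaining block forces that block to be a single $x_i$.
\end{enumerate}
Your $s_0$ observation already gives $|\con(r)|\geq2$, which supplies the partner these pairings need. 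Your announced intermediate goal, that $k$ cannot contain $z$ together with some $x_i$, is too weak. A present $k$ such as $z$ or $x_3$ alone would already push the count below $n+1$.

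Separately, the claim that no variable of $r$ is repeated is not ``immediate because $q_n$ is linear.'' Different occurrences of one variable $t$ may select different words of $\varphi(t)$: if $\varphi(t)=x_1+x_2$, the linear word $x_1x_2$ belongs to $\varphi(t^2)$. The correct argument uses that \emph{every} word of $K\varphi(r)\subseteq U_n\cup\{q_n\}$ and of $K\varphi(s)\subseteq U_n$ is linear. Selecting the same word of $\varphi(t)$ at each occurrence of a repeated variable then produces a nonlinear word in one of these images, a contradiction. The same device gives the linearity of $s$, which you need before Lemma~\ref{lem:tests} can be applied at all.
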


\begin{proof}
We view polynomials as sets of words. Since $q_n\notin U_n$,
the application must be in the direction that adds
$K\varphi(r)$; deleting words cannot introduce $q_n$. Hence
\begin{equation}\label{eq:inclusions}
\begin{aligned}
K\varphi(s)&\subseteq U_n,\\
K\varphi(r)&\subseteq U_n\cup\{q_n\},\\
q_n&\in K\varphi(r).
\end{aligned}
\end{equation}
We establish the required lower bound in three steps.

\emph{Step 1: linearity in the formal variables.}
Every word in the two images in \eqref{eq:inclusions} is
linear. Suppose that a formal variable $v$ occurred at least
twice in a word of $s$ or in $r$. Choose the same nonempty
word of $\varphi(v)$ at each occurrence, choose an arbitrary
word from every other substituted polynomial, and choose a
word from $K$ if that context is present. The resulting
expanded product would belong to one of the two images and
would repeat every letter of the chosen word of $\varphi(v)$.
This contradicts linearity. Thus $r$ and all words of $s$
are linear. Notice that this argument allows substitutions
by sums: the repeated choice is one of the products appearing
in the distributive expansion.

Suppose next that $r$ has just one formal variable $v$.
Applying condition~(3) of Lemma~\ref{lem:tests} to the valid
inequality $r\preceq s$ gives a word of $s$ with content
contained in $\{v\}$. It is nonempty and linear, hence is
$v$ itself. We would then have
$K\varphi(r)\subseteq K\varphi(s)\subseteq U_n$, contrary
to $q_n\in K\varphi(r)$. Therefore
\begin{equation}\label{eq:atleasttwo}
|\con(r)|\geq2.
\end{equation}

\emph{Step 2: a partition of the target word.}
Choose a word $h$ of $K$, or the empty word when the
multiplicative context is absent, and choose
$w_v\in\varphi(v)$ for each $v\in\con(r)$ so that
\begin{equation}\label{eq:partition}
q_n=h\prod_{v\in\con(r)}w_v.
\end{equation}
Such choices exist by the last inclusion in
\eqref{eq:inclusions} and the linearity of $r$.
Because $q_n=x_1\cdots x_nz$ is linear, all chosen factors
are linear, their contents are pairwise disjoint, and those
contents partition $\{x_1,\ldots,x_n,z\}$. Every $w_v$
is nonempty.

For distinct $v,w\in\con(r)$,
Lemma~\ref{lem:tests}(2) gives a word $d$ of $s$ containing
both $v$ and $w$. In its substituted image choose the already
selected words $w_v,w_w$, and choose arbitrary words for the
other variables of $d$. Multiplication by the selected $h$
produces a word of $K\varphi(s)$ and hence a word of $U_n$.
In particular, some word of $U_n$ contains
\begin{equation}\label{eq:pair-witness}
\con(h)\cup\con(w_v)\cup\con(w_w).
\end{equation}
The key feature of $U_n$ is that
\begin{equation}\label{eq:one-x}
\text{each word of $U_n$ containing $z$ contains exactly one $x_i$.}
\end{equation}
Indeed, these words are precisely $a_n,b_1,\ldots,b_n$.

\emph{Step 3: the context is empty and all blocks are singletons.}
If $h$ contained $z$, choose two distinct variables $v,w$
of $r$, using \eqref{eq:atleasttwo}. Their nonempty blocks
$w_v,w_w$ would consist of $x$-variables and, by disjointness,
would contain two different $x$-variables. The word supplied
by \eqref{eq:pair-witness} would contain both of them and
$z$, contrary to \eqref{eq:one-x}.

If $h$ were nonempty but did not contain $z$, it would
contain an $x$-variable. Let $v$ be the unique formal
variable whose selected block contains $z$, and choose
$w\neq v$ in $\con(r)$. The nonempty block $w_w$ contains
another $x$-variable, different from those in $h$. Again
\eqref{eq:pair-witness} contradicts \eqref{eq:one-x}.
These two possibilities exhaust all nonempty $h$, so $h$
is empty.

Let $w_v$ be the block containing $z$. If it also contained
an $x$-variable, pairing it with any other nonempty block
would give two $x$-variables together with $z$ in a word
of $U_n$. Thus $w_v=z$. If any other block contained two
or more $x$-variables, pairing that block with $w_v=z$
would yield the same contradiction. Every remaining block
is therefore a single $x_i$.

The partition \eqref{eq:partition} has exactly the $n+1$
singleton blocks $x_1,\ldots,x_n,z$, one for each distinct
formal variable of $r$. In particular, $r$ contains at
least $n+1$ distinct variables, as required.
\end{proof}

\begin{proposition}\label{prop:bounded}
Let $\Sigma$ be a set of identities valid in $\W$, each
involving at most $k$ variables. If $n\geq5$ is odd and
$n+1>k$, then $\sigma_n$ cannot be deduced from $\Sigma$
together with the axioms of commutative ai-semirings.
\end{proposition}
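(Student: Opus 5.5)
We argue by contradiction. Suppose $\sigma_n$ were deducible from $\Sigma$ together with the axioms of commutative ai-semirings. First we replace $\Sigma$ by its absorption form using Lemma~\ref{lem:absorption}. This yields a set $\Sigma'$ of identities $s+r\eqid s$. Each of them holds in $\W$, because its conjunction with the others is equivalent to an identity of $\Sigma$ modulo axioms that hold in $\W$; indeed each one follows from a member of $\Sigma$. Each still involves at most $k$ variables, since the replacement introduces no new variables. In particular every word $r$ occurring on the right has at most $k$ distinct variables. By the description of equational consequence recalled after Lemma~\ref{lem:absorption}, a deduction of $\sigma_n$ is a finite chain of normalized polynomials
\[
U_n=P_0,\;P_1,\;\ldots,\;P_m=U_n+q_n .
\]
Consecutive members of the chain are related by a step of form \eqref{eq:step}, using a member of $\Sigma'$ in either direction.

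Next we use soundness. Every member of $\Sigma'$ holds in $\W$, so every $P_j$ is $\W$-equivalent to $U_n$. Since $n\geq5$ is odd, Lemma~\ref{lem:isolation} gives $P_j\in\{U_n,U_n+q_n\}$ for all $j$. Because $P_0=U_n\neq P_m$, there is a least index $j$ with $P_{j+1}\neq P_j$, and for it $P_j=U_n$ and $P_{j+1}=U_n+q_n$. If this step runs in the direction from $K\varphi(s)+K\varphi(r)+R$ to $K\varphi(s)+R$, we can reverse it. This produces an application of \eqref{eq:step} of the same absorption identity leading from $U_n$ to $U_n+q_n$.

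Now Lemma~\ref{lem:step} applies to this application. It shows that $r$ contains at least $n+1>k$ distinct variables. This contradicts the bound on the variables of $\Sigma'$, so $\sigma_n$ is not deducible.

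The only delicate point is the reduction to a single step in the correct orientation. Reversibility of steps, together with the two-element equivalence class from Lemma~\ref{lem:isolation}, makes this immediate. Lemma~\ref{lem:step} itself already handles polynomial substitutions and either orientation of the added word, so no further work is needed there.
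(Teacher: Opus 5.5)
Your proof is correct and follows the paper's argument exactly. You pass to absorption form without gaining variables, use Lemma~\ref{lem:isolation} to confine every polynomial of the deduction to $\{U_n,U_n+q_n\}$, and apply Lemma~\ref{lem:step} to the first nontrivial step. The only blemish is the remark about reversing a deleting step. It is unnecessary, and taken literally a reversed step would run from $U_n+q_n$ to $U_n$, not from $U_n$ to $U_n+q_n$. The orientation $K\varphi(s)+K\varphi(r)+R\to K\varphi(s)+R$ only removes words and so cannot produce $q_n\notin U_n$, which is exactly how the proof of Lemma~\ref{lem:step} disposes of that case.
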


\begin{proof}
Replace the identities of $\Sigma$ by the absorption
identities in Lemma~\ref{lem:absorption}. Each replacement
is valid in $\W$, and no identity gains a variable.
Suppose that there were a deduction
\[
U_n=T_0,\ T_1,\ \ldots,\ T_m=U_n+q_n
\]
between normalized polynomials. Since every step uses an
identity valid in $\W$, every $T_i$ is equivalent to
$U_n$ in $\W$. Lemma~\ref{lem:isolation} therefore gives
$T_i\in\{U_n,U_n+q_n\}$ for every $i$.

Let $j$ be the least index with $T_j\neq U_n$. It exists
because the last polynomial differs from the first.
Then $T_{j-1}=U_n$ and $T_j=U_n+q_n$, so this single
step is an application of the form in Lemma~\ref{lem:step}.
The word on the absorbed side of its identity must contain
at least $n+1$ variables. The entire identity would then
have more than $k$ variables, a contradiction. Steps
using only the commutative ai-semiring axioms do not
change the normalized polynomial and cannot supply this
first nontrivial step.
\end{proof}

\begin{corollary}\label{cor:small-product}
The variety $\W$ and the $18$-element semiring $H\times D\times G$
are nonfinitely based, although each of $H,D,G$ is finitely based.
\end{corollary}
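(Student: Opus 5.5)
The plan has two parts: nonfinite basability of $\W$ and of $H\times D\times G$, and finite basability of each factor.

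\emph{$\W$ and $H\times D\times G$ are nonfinitely based.} First, $\Var(H\times D\times G)=\W$. The product lies in $\W$, and each factor is a homomorphic image of the product, so the two varieties coincide. It therefore suffices to treat $\W$. Suppose $\W$ had a finite basis $\Sigma$, and let $k$ be the maximal number of variables in its identities. We may add the semiring axioms, which involve three variables. Since $\W$ consists of commutative ai-semirings, $\Sigma$ derives the identities $xy\eqid yx$, $x+y\eqid y+x$ and $x+x\eqid x$. Hence any deduction from $\Sigma$ may be read modulo the axioms of commutative ai-semirings, and every identity of $\W$ follows from $\Sigma$ together with those axioms. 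Now choose an odd $n\geq5$ with $n+1>k$. By Lemma~\ref{lem:test-valid}, $\W\models\sigma_n$. By Proposition~\ref{prop:bounded}, $\sigma_n$ is not deducible from $\Sigma$ together with the commutative ai-semiring axioms. This is a contradiction. Equivalently, $\W$ has no basis with a bounded number of variables, which suffices by Lemma~\ref{lem:localfinite}.

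\emph{Each of $H$, $D$, $G$ is finitely based.} This part needs separate input, and I expect it to be the main obstacle, since nothing earlier in the paper addresses it. For $D$, the variety is that of distributive lattices viewed as semirings. It is classically finitely based by the ai-semiring axioms together with $xx\eqid x$, $xy\eqid yx$ and $x+xy\eqid x$. I would verify this directly by normal forms: polynomials reduce to antichains of sets, which are separated by $\{0,1\}$-evaluations.

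For $H$ and $G$, the first route is to cite the classification of the three-element ai-semirings. By \cite{ZRCSD,JRZ}, every three-element ai-semiring other than $S_7$ is finitely based. We showed $G\not\cong S_7$, since $G$ has the group $\{1,g\}$ while $S_7$ is flat nilpotent, and $H=S_{53}\neq S_7$. So both are finitely based, and this is the step I would carry out by citation.

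If a self-contained argument were wanted, I would proceed as follows. For $G$, I would use commutativity, $x^3\eqid x$, and the flat identities, which detect words modulo squares of letters. Then I would show that a polynomial is determined in $G$ by its set of words reduced mod~$2$ together with the affine conditions of Lemma~\ref{lem:tests}(4). For $H$, I would describe the value of a polynomial by which linear words it contains and its square-free supports, and then derive the corresponding normal forms from a few short identities. This direct verification is the laborious part, so I would rely on the citation.
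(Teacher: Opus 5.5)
Your proposal is correct and follows essentially the paper's proof. You identify $\Var(H\times D\times G)=\W$ via the surjective projections, and you rule out a bounded-variable basis for $\W$ by combining Lemma~\ref{lem:test-valid} with Proposition~\ref{prop:bounded}. Finite basability of $H$ and $G$ comes from the classification of three-element ai-semirings, since neither is $S_7$. The only difference is that the paper cites \cite[Corollary~5.1]{JRZ} for all three factors, including $D$, whereas you give the classical distributive-lattice basis for $D$ directly.
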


\begin{proof}
If $\Sigma$ were a basis for $\W$ with a finite variable bound $k$,
then it would derive every $\sigma_n$ by
Lemma~\ref{lem:test-valid}. For odd $n\geq5$ with $n+1>k$,
this contradicts Proposition~\ref{prop:bounded}.
Thus $\W$ has no bounded-variable basis, and in particular no
finite basis.

The direct product belongs to $\W$, and its three projections
are onto the factors. Hence $\Var(H\times D\times G)=\W$.
The factors have $3,2,3$ elements, respectively. Neither
$H$ nor $G$ is isomorphic to $S_7$: $H$ has chain addition,
while $G$ has a two-element subgroup. Their finite basability,
and that of $D$, follow from~\cite[Corollary~5.1]{JRZ}.
The nonfinite basis argument for their product uses
Lemma~\ref{lem:test-valid} and the support arguments above.
\end{proof}

\begin{proof}[Proof of Theorem~\ref{thm:main} for $\Athree$]
Suppose that $\Athree$ had an identity basis $\Sigma$
with a uniform finite variable bound $k$. Every identity
of $\Sigma$ is valid in $\W$ by \eqref{eq:testsinside}.
Adjoin multiplicative commutativity and work in the free
commutative ai-semiring. This introduces only an auxiliary
axiom valid in $\W$; we are not asserting that multiplication
is commutative throughout $\Athree$.

By Proposition~\ref{prop:valid}, every $\sigma_n$ holds
in $\Athree$, and hence follows from the identity basis
$\Sigma$ by equational completeness. The same deduction
remains available after adjoining the auxiliary axioms.
Choose an odd $n\geq5$ with $n+1>k$.
Proposition~\ref{prop:bounded} excludes the resulting
deduction of $\sigma_n$, a contradiction. Thus no such
bounded-variable basis exists. A finite basis would have
a largest variable count, so $\Athree$ is nonfinitely based.
\end{proof}

\section{Passing to arbitrary addition}
\label{sec:arbitrary}

We now transfer the valid identity family to $\Vthree$.
There are two stages: taking an additively idempotent
image, and then taking a quotient with commutative
addition.

We first establish the semigroup fact needed for the first stage.

\begin{lemma}\label{lem:sixth-power}
In every semigroup $S$ of order at most three,
\begin{equation}\label{eq:power}
(ab)^6=a^6b^6\qquad(a,b\in S).
\end{equation}
Moreover, $x^{12}=x^6=x^{36}$ for every $x\in S$.
\end{lemma}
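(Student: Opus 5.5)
The second assertion follows from the shape of monogenic subsemigroups, so I would prove it first. For $x\in S$, the cyclic subsemigroup $\langle x\rangle$ has some index $i$ and period $p$, and $i+p-1=|\langle x\rangle|\le 3$. Hence $i\le 3$ and $p\in\{1,2,3\}$. For $m\ge i$ we have $x^m=x^{m'}$ whenever $m\equiv m'\pmod p$. Since $6,12,36\ge 3\ge i$ and $p$ divides $6$, this gives $x^{12}=x^6=x^{36}$. It also shows that $x^6$ is the unique idempotent of $\langle x\rangle$, because $x^6x^6=x^{12}=x^6$. So the map $x\mapsto x^6$ takes values in the set $E$ of idempotents of $S$.

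For \eqref{eq:power}, I would split into cases according to $|E|\in\{1,2,3\}$; note $E\neq\emptyset$ in a finite semigroup.
\begin{enumerate}
\item If $|E|=1$, say $E=\{e\}$, then $(ab)^6=e$ and $a^6b^6=ee=e$, so the identity holds.
\item If $|E|=|S|$, then $S$ is a band and $x^6=x$ for all $x$, so both sides equal $ab$.
\item If $|S|\le2$, one of these two cases already applies, except when $S$ is a two-element semigroup with a single idempotent. That is covered by case (1).
\end{enumerate}
It also helps that \eqref{eq:power} is invariant under passing to the opposite semigroup: in the opposite semigroup it reads $(ba)^6=b^6a^6$, which is the same identity after renaming. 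So each case only needs to be treated up to anti-isomorphism.

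The remaining case, and the main obstacle, is $|S|=3$ with $E=\{e,f\}$ and one non-idempotent $t$. Then $t^2\ne t$, and $t^6\in\{e,f\}$; by symmetry take $t^6=e$. There are two subcases.
\begin{enumerate}
\item If $t^2=e$ and $t^3=t$, then $\{e,t\}$ is a group of order two with identity $e$. The element $f$ must then be a zero, an identity, or act as a left or right zero on this group.
\item If $t^2=e$ and $t^3=e$, or if the index of $t$ equals $2$ with $t^2=e$ a zero of $\langle t\rangle$, then $t$ is nilpotent over $e$.
\end{enumerate}
In each subcase I would pin down the products $ef,fe,et,te,ft,tf$ using associativity, the same way the proof of Lemma~\ref{lem:flat-quotients} constrains multiplication tables. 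The two things to establish are:
\begin{enumerate}
\item every product of the two idempotents $e,f$ is again idempotent, so that $(ef)^6=ef=e^6f^6$;
\item every product involving $t$ has sixth power equal to the product of the corresponding idempotents $e$ or $f$.
\end{enumerate}
If deriving these structurally becomes unwieldy, I would fall back on the known list of semigroups of order three: there are $18$ up to isomorphism and anti-isomorphism. By the opposite-invariance noted above, it suffices to verify \eqref{eq:power} directly on the tables of those with exactly two idempotents.
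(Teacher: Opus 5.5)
Your reduction matches the paper's. You show that $x^6$ is the idempotent of $\langle x\rangle$, you dispose of the cases $|E|=1$ and $E=S$, and you reduce everything to $S=\{e,f,t\}$ with $t^6=e$. But the case you yourself call the main obstacle is never proved. You list the facts that would suffice: closure of $E$ under products, $(tf)^6=ef$, $(ft)^6=fe$, and $(te)^6=(et)^6=e$. You then propose either to ``pin down the products using associativity'' in each subcase or to check the tables of order-three semigroups, but you carry out neither. This is the entire nontrivial content of the lemma. Your subcase descriptions, such as the possible roles of $f$ relative to the group $\{e,t\}$, are also asserted without proof. You also use $t^2=e$ without justifying it. It holds because $\langle t\rangle$ has a unique idempotent, so $f\notin\langle t\rangle$ and hence $t^2\in\{t,e\}\setminus\{t\}$.

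The missing idea is a short associativity argument, and it needs no split on $t^3$. Since $t^2=e$, you can write $ef=t(tf)$ and $fe=(ft)t$.
\begin{enumerate}
\item Suppose $ef=t$. Then $tf=(ef)f=ef=t$, so $ef=t(tf)=t^2=e$, a contradiction. Symmetrically $fe\neq t$, so $E$ is closed under multiplication.
\item Each possible value of $tf$ gives $(tf)^6=ef$. If $tf=t$, then $ef=t^2=e$. If $tf=e$, then $ef=(tf)f=tf=e$. If $tf=f$, then $ef=t(tf)=tf=f$. Dually $(ft)^6=fe$.
\item The products $te$ and $et$ both equal $t^3$, and $t^3\neq f$ because $(t^3)^2=e$. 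So $te,et\in\{t,e\}$, and their sixth powers are $e=t^6e^6$. Likewise $(t^2)^6=e=(t^6)^2$.
\end{enumerate}
This completes the check that $x\mapsto x^6$ is a homomorphism.

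Your fallback to the classification of order-three semigroups would also work if actually executed. However, it replaces this argument with an external enumeration, and as written it is only a promise.
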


\begin{proof}
The cyclic subsemigroup generated by $x$ has at most three elements.
Its index is at most three and its period is one of $1,2,3$.
The powers $x^6,x^{12},x^{36}$ are therefore equal. In particular,
$x^6$ is idempotent. Put $\rho(x)=x^6$, and let $E$ be the set
of idempotents of $S$. We prove that $\rho$ is a homomorphism.

If $E=\{e\}$, then $\rho$ is the constant map with value $e$,
so the assertion follows from $e^2=e$. If $E=S$, then $\rho$
is the identity map. The only remaining possibility is
$S=\{e,f,t\}$, where $e,f$ are distinct idempotents and $t$
is not idempotent. Rename $e,f$ so that $t^6=e$.
Since $t^2\ne t$, the element $t^2$ is idempotent, and
\[
t^2=(t^2)^3=t^6=e.
\]
Also $te=et=t^3$. This value cannot be $f$, because
$(t^3)^2=t^6=e\ne f=f^2$. Thus $te=et\in\{t,e\}$.

We first check that $E$ is closed under multiplication.
If $ef=t$, then $tf=(ef)f=ef=t$, whence
\[
ef=t^2f=t(tf)=t^2=e,
\]
a contradiction. If $fe=t$, then $ft=f(fe)=fe=t$, and
\[
fe=ft^2=(ft)t=t^2=e,
\]
again a contradiction. Consequently $ef,fe\in E$.

The map $\rho$ fixes $e,f$ and sends $t$ to $e$.
It preserves products of elements of $E$, by the closure just
proved. It also preserves the products $tt,te,et$, since their
images are $e$. To handle $tf$, consider its three possible
values. If $tf=t$, then
\[
ef=t^2f=t(tf)=t^2=e=\rho(tf).
\]
If $tf=e$, then $ef=(tf)f=tf=e=\rho(tf)$.
If $tf=f$, then $ef=t^2f=t(tf)=tf=f=\rho(tf)$.
Thus in every case
$\rho(tf)=ef=\rho(t)\rho(f)$. The same calculation with
the multiplication reversed gives
$\rho(ft)=fe=\rho(f)\rho(t)$. This exhausts all products
and proves~\eqref{eq:power}.
\end{proof}

For $m\geq1$, let $mx$ denote the sum of $m$ copies of $x$.
Applying these facts to the additive reduct of each
three-element semiring, and then using preservation
under HSP, gives
\begin{equation}\label{eq:reflection}
\begin{aligned}
6(x+y)&\eqid6x+6y, &
12x&\eqid6x,\\
(6x)(6y)&\eqid6(xy),&
36x&\eqid6x
\end{aligned}
\end{equation}
throughout $\Vthree$. For the multiplicative equality,
both distributive laws expand $(6x)(6y)$ into a sum of
$36$ copies of $xy$, and $36(xy)\eqid6(xy)$ then applies.
No interchange of unequal summands is needed.

\begin{lemma}\label{lem:reflection}
For $A\in\Vthree$, the map $e_A:A\to A$ given by
$e_A(a)=6a$ is an idempotent endomorphism. Its image
$E(A)$ is an additively idempotent semiring.
If $B$ is any additively idempotent semiring, the relation
\begin{equation}\label{eq:D}
a\mathrel{\mathcal D_+}b
\quad\Longleftrightarrow\quad
a+b+a=a\ \text{ and }\ b+a+b=b
\end{equation}
is a semiring congruence, and $Q(B)=B/\mathcal D_+$ is
an ai-semiring.
\end{lemma}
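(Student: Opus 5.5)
The argument has two independent parts. For $e_A$ we use only the identities \eqref{eq:reflection}. For $\mathcal D_+$ we use band theory of the additive reduct together with distributivity.

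\emph{The map $e_A$.} Since $A\in\Vthree$, the identities \eqref{eq:reflection} hold in $A$.
\begin{itemize}
\item The identity $6(x+y)\eqid 6x+6y$ says that $e_A$ preserves addition.
\item The identity $(6x)(6y)\eqid 6(xy)$ says that $e_A$ preserves multiplication.
\item Idempotence follows from $e_A(e_A(a))=6(6a)=36a=6a$.
\end{itemize}
Thus $e_A$ is an idempotent endomorphism. Its image $E(A)$ is therefore a subalgebra, and hence a semiring. For $b=6a\in E(A)$ we have $b+b=12a=6a=b$. So $E(A)$ is additively idempotent.

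\emph{The congruence $\mathcal D_+$.} Let $B$ be additively idempotent, so $(B,+)$ is a band. I plan to proceed in four steps.
\begin{enumerate}
\item Show that \eqref{eq:D} is Green's relation $\mathcal D$ on the band $(B,+)$. Equivalently, $a\mathrel{\mathcal D_+}b$ exactly when $a$ and $b$ generate the same principal two-sided ideal. I will recall the standard fact that in a band this relation is a congruence and that $B/\mathcal D_+$ is the greatest semilattice image. For self-containment I will verify compatibility directly. If $a\mathrel{\mathcal D_+}b$, then $(a+c)+(b+c)+(a+c)=a+c$. This uses band identities such as $x+y+x+z+x=x+y+z+x$, which follows from the variety of bands being defined by $x+y+z+x \eqid x+y+x+z+x$ on rectangular components. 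Concretely, I will use that $\mathcal D$ on a band is the least semilattice congruence.
\item Show that multiplication respects $\mathcal D_+$. Suppose $a+b+a=a$ and $b+a+b=b$. Left distributivity gives $ca+cb+ca=c(a+b+a)=ca$, and similarly $cb+ca+cb=cb$. So $ca\mathrel{\mathcal D_+}cb$, and the same argument on the right gives $ac\mathrel{\mathcal D_+}bc$. Transitivity then yields full multiplicative compatibility.
\item Conclude that $Q(B)$ is a semiring, since it is a quotient by a congruence.
\item Show that addition in $Q(B)$ is commutative and idempotent. Idempotence is inherited from $B$. For commutativity, check $(a+b)\mathrel{\mathcal D_+}(b+a)$, i.e. $a+b+b+a+a+b=a+b+a+b=a+b$, using idempotence, and symmetrically.
\end{enumerate}

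The main obstacle is additive compatibility together with transitivity of $\mathcal D_+$, as defined by the explicit formula \eqref{eq:D}. Here I intend to cite, or briefly reprove, the classical result of Clifford and McLean that every band is a semilattice of rectangular bands, with the components being the $\mathcal D$-classes characterized by $a+b+a=a$ and $b+a+b=b$. Once this is in hand, the multiplicative part and the ai-property are short calculations as above.
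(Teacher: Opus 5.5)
Your proposal matches the paper's proof. The retraction properties come from the four identities in \eqref{eq:reflection}. Additive compatibility of $\mathcal D_+$ and the ai-property of the quotient come from the classical Clifford--McLean fact that \eqref{eq:D} is the least semilattice congruence on a band. Multiplicative compatibility comes from the two distributive laws together with transitivity.

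One correction: drop your ``self-contained'' justification via $x+y+x+z+x\eqid x+y+z+x$. That is the defining law of regular bands and fails in general bands. The additive step is carried by the cited least-semilattice-congruence theorem, exactly as in the paper.
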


\begin{proof}
The first and third identities in \eqref{eq:reflection}
give, respectively,
\[
e_A(a+b)=e_A(a)+e_A(b),\qquad
e_A(ab)=e_A(a)e_A(b).
\]
Thus $e_A$ is an endomorphism and its image is a
subsemiring. The other two identities give
\[
e_A(e_A(a))=36a=6a=e_A(a),\qquad
e_A(a)+e_A(a)=12a=6a=e_A(a).
\]
Consequently $e_A$ is a retraction onto $E(A)$,
whose addition is idempotent.

For the second assertion, the additive reduct $(B,+)$
is a band. The semilattice decomposition theorem for
bands states that \eqref{eq:D} is the least semilattice
congruence of this reduct; see~\cite{CP}. In particular,
it is an equivalence relation compatible with addition,
and the additive quotient is commutative and idempotent.
It remains to check compatibility with multiplication.

Suppose $a\mathrel{\mathcal D_+}b$ and $c\in B$.
Left multiplication and distributivity give
\[
ca+cb+ca=c(a+b+a)=ca,\qquad
cb+ca+cb=c(b+a+b)=cb.
\]
Hence $ca\mathrel{\mathcal D_+}cb$.
Right multiplication gives, in the same way,
\[
ac+bc+ac=(a+b+a)c=ac,\qquad
bc+ac+bc=(b+a+b)c=bc,
\]
so $ac\mathrel{\mathcal D_+}bc$.
These two compatibilities, together with transitivity,
give compatibility when both factors are replaced.
Thus $\mathcal D_+$ is a congruence of the full semiring.
The quotient inherits associativity and distributivity,
and its addition is a semilattice operation. It is
therefore an ai-semiring.
\end{proof}

\begin{proposition}\label{prop:translation}
For semiring terms $u,q$,
\[
\Athree\models q\preceq u
\quad\Longleftrightarrow\quad
\Vthree\models 6u+6q+6u\eqid6u.
\]
\end{proposition}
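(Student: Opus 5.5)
We prove the two directions separately, using Lemma~\ref{lem:reflection} for the forward direction and the inclusion $\Athree\subseteq\Vthree$ for the converse.

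\emph{Reverse direction.} Suppose $\Vthree\models 6u+6q+6u\eqid6u$. Since $\Athree\subseteq\Vthree$, the identity holds in every ai-semiring $A$ of order three.

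In such an $A$, addition is idempotent, so $6t=t$ for every term value $t$. The identity therefore becomes $u+q+u=u$. Commutativity and idempotence of addition rewrite this as $u+q=u$, which is $q\preceq u$. Hence $\Athree\models q\preceq u$.

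\emph{Forward direction.} Assume $\Athree\models q\preceq u$. It suffices to check the identity in each three-element semiring $S$, since $\Vthree$ is generated by these and identities are preserved under HSP.

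1. Fix an evaluation $\alpha$ of the variables in $S$. By Lemma~\ref{lem:reflection}, $e_S$ is an endomorphism, so for any term $t$ we have
\[
6\,t(\alpha)=e_S(t(\alpha))=t(e_S\circ\alpha).
\]
Thus $6u(\alpha)$ and $6q(\alpha)$ are the values of $u$ and $q$ under an evaluation $\beta=e_S\circ\alpha$ into $E(S)$. Here $E(S)$ is an additively idempotent semiring of order at most three.

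2. It therefore suffices to prove that every additively idempotent semiring $B$ of order at most three satisfies $u+q+u\eqid u$. Consider the quotient $Q(B)=B/\mathcal D_+$, which is an ai-semiring by Lemma~\ref{lem:reflection}. Its order is at most three.

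3. We check that every ai-semiring of order at most three lies in $\Athree$. Those of order exactly three are generators. A smaller one embeds in a three-element ai-semiring by adjoining absorbing elements, as was done for $D$ in Section~\ref{sec:tests}; adjoin one or two such elements as needed. Applying this construction twice yields a three-element ai-semiring that contains the original as a subalgebra.

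4. Consequently $Q(B)\models u+q\eqid u$. Hence, under any evaluation in $B$, the elements $U=u(\beta)$ and $U'=U+q(\beta)$ are $\mathcal D_+$-related. By \eqref{eq:D} this gives
\[
U+U'+U=U,\quad\text{that is,}\quad U+U+q(\beta)+U=U.
\]
Idempotence of addition reduces this to $U+q(\beta)+U=U$, which is $u+q+u\eqid u$ in $B$.

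5. Applying step 4 in $B=E(S)$ with the evaluation $\beta$ gives
\[
6u(\alpha)+6q(\alpha)+6u(\alpha)=6u(\alpha).
\]
This is the required identity in $S$.

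\emph{Main obstacle.} The delicate point is step 4: passing from a relation in the quotient back to $B$ itself. This works because the $\mathcal D_+$-class relation in \eqref{eq:D} yields $a+b+a=a$ outright, not merely an equality modulo the congruence. We must also confirm that $e_S$ genuinely lands in an additively idempotent subsemiring, which is the content of Lemma~\ref{lem:reflection}.

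A minor check remains in step 3: adjoining an absorbing element preserves additive idempotence and commutativity, so the enlarged algebra is again an ai-semiring.
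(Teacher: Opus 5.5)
Your proof is correct and follows the paper's argument essentially step for step. You retract via $e_S$ onto $E(S)$, pass to the band quotient $Q(E(S))\in\Athree$ (adjoining absorbing elements when the order is below three), and pull the absorption back to $E(S)$ through one defining equation of $\mathcal D_+$ together with additive idempotence; the converse is the same reduction via $6a=a$. The only difference is cosmetic: you first state the result for an arbitrary additively idempotent $B$ of order at most three and then specialize to $B=E(S)$.
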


\begin{proof}
Assume first that $\Athree\models q\preceq u$, and fix
a three-element semiring $S$. The ai-semiring
$Q(E(S))$ has at most three elements.
Any ai-semiring of size less than three embeds in one
of size three: adjoin a new element absorbing for both
operations, and repeat if necessary. At each stage
the verification of the axioms is as in
Section~\ref{sec:tests}. Thus $Q(E(S))$ belongs to
$\Athree$.

Let $\theta$ be an arbitrary assignment of variables
in $S$, and put
\[
a=e_S(u^S(\theta))=6u^S(\theta),\qquad
b=e_S(q^S(\theta))=6q^S(\theta).
\]
The composite
$S\xrightarrow{e_S}E(S)\xrightarrow{\pi}Q(E(S))$
is a homomorphism. Term evaluation therefore commutes
with this composite: under the assignment
$x\mapsto\pi(e_S(\theta(x)))$, the values of $u,q$
are $[a],[b]$, respectively. Since $q\preceq u$
holds in $Q(E(S))$, we have
\[
[a]+[b]=[a],
\quad\text{and hence}\quad
(a+b)\mathrel{\mathcal D_+}a.
\]
One of the defining equations of this relation is
$a+(a+b)+a=a$. As addition in $E(S)$ is associative
and idempotent, this reduces to $a+b+a=a$.
By the definitions of $a,b$, it is precisely the
evaluation of $6u+6q+6u\eqid6u$ under $\theta$.
The assignment and $S$ were arbitrary, so the
sandwich identity holds on all three-element
generators and hence throughout $\Vthree$.

Conversely, suppose that the sandwich identity holds
in $\Vthree$. It holds in every three-element
ai-semiring, since these are among the generators
of $\Vthree$. In an ai-semiring $6a=a$ for every
element, and
$u+q+u=u+q$ by commutativity and idempotence.
Thus the sandwich identity reduces to $u+q\eqid u$.
It follows that $\Athree\models q\preceq u$.
\end{proof}

\begin{corollary}\label{cor:intersection}
The ai-subvariety of $\Vthree$ is precisely $\Athree$:
\[
\Vthree\cap\AI=\Athree.
\]
\end{corollary}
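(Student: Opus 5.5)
We prove the two inclusions separately. The inclusion $\Athree\subseteq\Vthree\cap\AI$ is immediate. Every three-element ai-semiring is a three-element semiring, so $\Athree\subseteq\Vthree$. Moreover, $\Athree$ is generated by ai-semirings, so it satisfies $x+y\eqid y+x$ and $x+x\eqid x$; hence $\Athree\subseteq\AI$.

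For the reverse inclusion, we compare identities. Since $\Vthree\cap\AI$ and $\Athree$ are varieties, it suffices to show that every identity of $\Athree$ holds in $\Vthree\cap\AI$. By Lemma~\ref{lem:absorption}, applied without commutativity of multiplication, we argue as in its first paragraph. Modulo the ai-axioms (commutativity and idempotence of addition), an identity $u\eqid v$ is equivalent to the pair of absorption identities $v\preceq u$ and $u\preceq v$. Indeed, if $u+v\eqid u$ and $v+u\eqid v$ hold, then additive commutativity gives $u\eqid v$. This step uses only additive commutativity and idempotence, which are available in every member of $\AI$, so no multiplicative commutativity is needed. It therefore suffices to prove that whenever $\Athree\models q\preceq u$, the identity $u+q\eqid u$ holds in every $A\in\Vthree\cap\AI$.

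Now fix such $u,q$. By Proposition~\ref{prop:translation}, $\Vthree\models 6u+6q+6u\eqid6u$, so this sandwich identity holds in $A$. In $A$, addition is idempotent, so $6a=a$ for every element. Addition is also commutative and idempotent, so $u+q+u=u+q$. The sandwich identity thus reduces in $A$ to $u+q\eqid u$, as required. Hence $A$ satisfies every identity of $\Athree$, and so $A\in\Athree$.

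No step is a real obstacle: the substantive work is contained in Proposition~\ref{prop:translation}, that is, in the retraction $e_A$ and the band quotient $Q$. The only point needing care is the reduction of an arbitrary identity to absorption identities without assuming multiplicative commutativity. Here we should observe that semiring terms over $\AI$ can be handled directly: we never normalize terms, and only use $u\eqid v$ $\Leftrightarrow$ $u+v\eqid u\ \&\ v+u\eqid v$ under the ai-axioms. This equivalence needs only commutativity and idempotence of addition.
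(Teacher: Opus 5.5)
Your proof is correct and follows essentially the same route as the paper. Both arguments split $u\eqid v$ into the two absorption identities $u+v\eqid u$ and $v+u\eqid v$ valid in $\Athree$, transfer each to $\Vthree$ by Proposition~\ref{prop:translation}, and collapse the resulting sandwich identities in $A$ using additive idempotence and commutativity. The detour through Lemma~\ref{lem:absorption} is unnecessary, as you note yourself, because only this elementary equivalence under the ai-axioms is used.
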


\begin{proof}
The inclusion $\Athree\subseteq\Vthree\cap\AI$ is immediate.
For the reverse inclusion, let $A\in\Vthree\cap\AI$, and let
$u\eqid v$ be any identity of $\Athree$. Since addition is a
semilattice operation in $\Athree$, both
$u+v\eqid u$ and $v+u\eqid v$ hold there.
Proposition~\ref{prop:translation} gives
\[
6u+6v+6u\eqid6u,\qquad
6v+6u+6v\eqid6v
\]
in $\Vthree$, and hence in $A$. Because $A$ is an ai-semiring,
these identities reduce to $u+v\eqid u$ and $u+v\eqid v$.
Thus $A$ satisfies $u\eqid v$. It satisfies every identity of
$\Athree$, so $A\in\Athree$ by Birkhoff's theorem.
\end{proof}

By Propositions~\ref{prop:valid} and~\ref{prop:translation},
$\Vthree$ satisfies
\begin{equation}\label{eq:fullfamily}
6U_n+6q_n+6U_n\eqid6U_n
\qquad(n\geq3\text{ odd}).
\end{equation}
The structural consequence above also gives a short completion
of the main proof.

\begin{proof}[Completion of the proof of Theorem~\ref{thm:main}]
Suppose that $\Sigma$ were an identity basis for
$\Vthree$ whose identities involve at most $k$
variables. By Corollary~\ref{cor:intersection}, the set
\[
\Sigma\cup\{x+y\eqid y+x,\ x+x\eqid x\}
\]
is a basis for $\Athree$ whose identities involve at most
$\max(k,2)$ variables. This contradicts the result already
proved for $\Athree$. Hence $\Vthree$ also has no bounded-variable
basis, and Theorem~\ref{thm:main} follows.
\end{proof}

\begin{remark}
The argument applies even when the proposed basis is infinite.
The family~\eqref{eq:fullfamily} records explicit consequences of
$\Vthree$; it is not asserted to be a complete identity basis.
\end{remark}

\smallskip
\noindent\textbf{Acknowledgments.}
This work was supported by the National Natural Science Foundation of China (12371024), the Science and Technology Research Program of Chongqing Municipal Education Commission (KJZD-K2024011102) and the Chongqing Natural Science Foundation Innovation and Development Joint Fund (Municipal Education Commission) (CSTB2025NSCQ-LZX0067).

GPT-6 Astra (OpenAI) assisted with language polishing and some of the calculations. The authors take full responsibility for the entire paper.

\end{document}